\newtheorem{theorem}{Theorem}%[section]
\newtheorem*{theorem*}{Theorem}
\newtheorem{lemma}[theorem]{Lemma}
\newtheorem{proposition}[theorem]{Proposition}
\newtheorem{corollary}[theorem]{Corollary}
\theoremstyle{definition}
\newtheorem{definition}[theorem]{Definition}
\newtheorem{example}[theorem]{Example}
\theoremstyle{remark}
\numberwithin{equation}{section}
\newcommand{\svw}[1]{\textcolor{black}{#1}}
\newcommand{\svwii}[1]{\textcolor{black}{#1}}
\newcommand{\svwM}[1]{\textcolor{black}{#1}}
\newcommand{\key}{\kappa}
\newcommand{\fund}{\mathfrak{F}}
\newcommand{\QKT}{\mathrm{QKT}}
\newcommand{\KT}{\mathrm{KT}}
\newcommand{\wt}{\mathrm{wt}}
\renewcommand{\a}{\mathbf{a}}
\renewcommand{\b}{\mathbf{b}}
\newlength\cellsize \setlength\cellsize{16\unitlength}
\newcommand\cellify[1]{\def\thearg{#1}\def\nothing{}%
\ifx\thearg\nothing\vrule width0pt height\cellsize depth0pt%
  \else\hbox to 0pt{\usebox2\hss}\fi%
  \vbox to \cellsize{\vss\hbox to \cellsize{\hss$_{#1}$\hss}\vss}}
\newcommand\tableau[1]{\vtop{\let\\=\cr
\setlength\baselineskip{-16000pt}
\setlength\lineskiplimit{16000pt}
\setlength\lineskip{0pt}
\halign{&\cellify{##}\cr#1\crcr}}}
\begin{document}

\title[Slide multiplicity free key polynomials]{Slide multiplicity free key polynomials}

\author{Soojin Cho}
\address{Department of Mathematics, Ajou University, Suwon  16499,  Korea}
\email{chosj@ajou.ac.kr}

\author{Stephanie van Willigenburg}
\address{Department of Mathematics, University of British Columbia, Vancouver BC V6T 1Z2, Canada}
\email{steph@math.ubc.ca}

\thanks{This research was supported by the Ajou University research fund. This research was also supported in part by the National Sciences and Engineering Research Council of Canada.}

\begin{abstract}
Schubert polynomials are refined by the key polynomials of Lascoux-Sch\"{u}tzenberger, which in turn are refined by the fundamental slide polynomials of Assaf-Searles. In this paper we determine which fundamental slide polynomial refinements of key polynomials, indexed by strong compositions, are multiplicity free. We also give a recursive algorithm to determine all terms in the fundamental slide polynomial refinement of a key polynomial indexed by a strong composition. From here, we apply our results to begin to classify which fundamental slide polynomial refinements, indexed by weak compositions, are multiplicity free. We completely resolve the cases when the weak composition has at most two nonzero parts or the sum has at most two nonzero terms.
\end{abstract}

\keywords{key polynomial, Kohnert tableau, multiplicity free, fundamental slide polynomial}
\subjclass[2010]{05E05, 05E10, 14N15}
\date{\today}

\maketitle

%%%%%%%%%%%%%%%%%%%%%%%%%%%%%%%%%%%
\section{Introduction} \label{sec:intro}

Schubert polynomials were introduced by Lascoux-Sch\"{u}tzenberger \cite{LS82}, and represent cohomology classes of Schubert cycles in flag varieties. They are also generalizations of the well-known Schur polynomials and are generating functions for pipe dreams. Lascoux-Sch\"{u}tzenberger showed in \cite{LS90} that Schubert polynomials expand as a positive linear combination of key polynomials, which are not only a tool for studying Schubert polynomials but also arise as characters for the Demazure modules of type $A$ \cite{I03, Mas09, RS95}. Recently multiplicity free key polynomials in terms of monomials have arisen in the study of spherical Schubert geometry \cite{HY20} and multiplicity free key polynomials in terms of monomials were recently succinctly classified by Hodges-Yong in \cite{HY20MF}.

This result joins a myriad of other multiplicity free classifications in algebraic combinatorics including multiplicity free products of Schur functions in terms of Schur functions \cite{Stem01}, and analogously for multiplicity free Schur $P$-function products \cite{Bess02}, multiplicity free skew Schur functions in terms of Schur functions \cite{Gut10, TY10}, multiplicity free Schur $P$-functions in terms of Schur functions \cite{SvW07}, multiplicity free Stanley symmetric functions in terms of monomials \cite{BP14}, multiplicity free Schubert polynomials in terms of monomials \cite{FM20}, and multiplicity free Schur functions in terms of fundamental quasisymmetric functions \cite{BvW13}. This latter result is as follows.

\begin{theorem}[Theorem 3.3, \cite{BvW13}]\label{thm:BvW13} 
For $\lambda$  a partition of $n$,  the Schur function $s_\lambda$ has a multiplicity free expansion into a sum of fundamental quasisymmetric functions if and only if $\lambda$ or its transpose is one of 
\begin{enumerate}
    \item[(1)] $(3,3)$ if $n=6$,
    \item[(2)] $(4,4)$ if $n=8$,
    \item[(3)] $(n-2, 2)$ if $n\geq 4$,
    \item[(4)] $(n-k, 1^k)$ if $n\geq 1$ and $0\leq k\leq n-1$.
\end{enumerate}
\end{theorem}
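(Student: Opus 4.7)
The plan is to start from Gessel's fundamental quasisymmetric expansion
\[
s_\lambda \;=\; \sum_{T \in \SYT(\lambda)} F_{\Des(T)},
\]
which immediately converts the problem into the purely combinatorial question: for which partitions $\lambda$ do distinct standard Young tableaux of shape $\lambda$ have distinct descent compositions? (Here $\Des(T)$ is the usual descent set $\{i : i+1 \text{ lies strictly below } i \text{ in } T\}$, recast as a composition of $n$.) Once this reformulation is in place, the theorem splits into a sufficiency direction (each listed shape has this injectivity property) and a necessity direction (every other shape violates it).

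For sufficiency I would treat each family separately. For a hook $\lambda=(n-k,1^k)$ an SYT is determined by the $k$-element set $C$ of entries of its first column below the top row, and a direct inspection shows $\Des(T) = \{c-1 : c \in C\}$; so $\Des(T)$ recovers $C$ and hence $T$. For $\lambda = (n-2,2)$ an SYT is determined by the pair $\{a,b\}$ of entries in row $2$, and I would show that $\{a,b\}$ can be read off $\Des(T)$ by a short case analysis on the at most two descents that involve the cells $(2,1)$ and $(2,2)$. The two exceptional rectangles $(3,3)$ and $(4,4)$ have only $5$ and $14$ standard tableaux, so I would simply tabulate $\Des(T)$ for every $T$ and verify injectivity by inspection. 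Because $s_{\lambda'}$ is obtained from $s_\lambda$ by the involution $F_\alpha \mapsto F_{\alpha^{\mathrm{rev}}}$ combined with transposition of tableaux, multiplicity-freeness is preserved under transpose, which is why the statement is phrased as ``$\lambda$ or its transpose.''

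Necessity is the main obstacle. Given $\lambda$ not appearing (up to transpose) in the list, I must produce two distinct $T,T' \in \SYT(\lambda)$ with $\Des(T)=\Des(T')$. The guiding principle is to use an \emph{elementary dual equivalence} move: swap $i$ and $i\pm 2$ in a triple $\{i-1,i,i+1\}$ or $\{i,i+1,i+2\}$ whose positions are configured so that the swap yields a valid SYT and does not alter the descent set at any index. Such a move is available whenever $\lambda$ contains a small inner sub-configuration supporting three non-collinear cells with room to rearrange; the shapes in the theorem are precisely those too ``thin'' or too ``small'' to contain such a configuration. I would organize the argument by cases: two-row shapes $(n-m,m)$ with $m\geq 3$, two-column shapes by transpose, rectangles $(k,k)$ with $k \geq 5$, and the remaining shapes having at least three rows and at least three columns not of hook or near-hook form. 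In each case I would exhibit an explicit triple of consecutive entries admitting the move.

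The delicate part is in the last case: checking that every $\lambda$ with at least three rows, at least three columns, and not equal to $(3,3)$ or $(4,4)$ contains the required triple, and that after making the swap the descent set is genuinely preserved (no new descent is created at $i-1$ or $i+1$, and the descent at $i$ is unchanged). I expect to handle this by reducing to a minimal bad shape—something like $(3,3,1)$, $(3,3,2)$, $(4,4,1)$, $(4,4,2)$, $(5,5)$, or $(3,3,3)$—verifying the witness on each, and then extending to larger $\lambda$ by placing the large entries $n,n-1,\ldots$ in an initial segment along the northwest boundary that is common to both $T$ and $T'$, so that the extension does not disturb the equality of descent sets. The combinatorics of guaranteeing that such a common northwest extension exists is where the argument is most technical and will require the bulk of the case-checking.
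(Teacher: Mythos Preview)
The paper does not prove this theorem at all: it is quoted in the introduction as Theorem~3.3 of \cite{BvW13} and used as background and as a tool (via Lemma~\ref{lem:MF}) for the paper's own results on key polynomials. There is therefore no ``paper's own proof'' to compare your proposal against.

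That said, your outline is a reasonable sketch of how the result in \cite{BvW13} is actually established: Gessel's expansion $s_\lambda=\sum_{T\in\SYT(\lambda)}F_{\Des(T)}$ reduces the question to injectivity of $T\mapsto\Des(T)$, sufficiency for hooks and $(n-2,2)$ follows from the bijective descriptions you give, and the small rectangles are checked directly. For necessity your idea of producing two SYT with equal descent sets via a local swap is the right one, though your write-up leaves the real work---the case analysis showing every non-listed shape (up to transpose) admits such a pair, and that the swap preserves the full descent set---as an acknowledged gap. In particular, the reduction to ``minimal bad shapes'' followed by a common extension along the boundary needs a careful argument that the extension does not create or destroy descents at the interface; as written this is asserted rather than proved. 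If you intend to supply a self-contained proof, that is where the effort must go; otherwise a citation to \cite{BvW13} suffices, as the present paper does.
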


Fundamental quasisymmetric functions themselves are an active area of study, being the weight enumerators of chains in the theory of $P$-partitions \cite{Ges84} and being isomorphic to the irreducible characters of the $0$-Hecke algebra \cite{French96}. They are also the stable limits of fundamental slide polynomials \cite[Theorem 4.4]{AS18} that were introduced by Assaf-Searles to study Schubert polynomials \cite{AS17} who similarly showed that key polynomials are the stable limits of Schur functions \cite[Corollary 4.9]{AS18}.

Since Schur functions and fundamental quasisymmetric functions are stable limits of key polynomials and fundamental slide polynomials respectively,  as described in Theorem~\ref{thm:limit}, it is natural to ask if we can classify key polynomials that are expanded as a multiplicity free sum of fundamental slide polynomials.
In this paper we consider the multiplicity free expansion of key polynomials into fundamental slide polynomials, \svw{and obtain the following classification in Theorem~\ref{thm:main2}.}
\svw{\begin{theorem*} For $\alpha$ a strong composition, the key polynomial $\key _\alpha$ has a multiplicity free expansion into a sum of fundamental slide polynomials if and only if $\alpha=(\alpha_1, \dots, \alpha_\ell)$ satisfies all three of the following conditions: \begin{enumerate}
\item[(a)] There is no $i<j<k$ such that $\alpha_i<\alpha_j<\alpha_k$.
\item[(b)] There is no $i<j<k<l$ such that $\alpha_i, \alpha_j<\alpha_l<\alpha_k$.
\item[(c)] There is no $i<j<k<l$ such that $\alpha_i, \alpha_j+1<\alpha_k=\alpha_l$.
\end{enumerate}
\end{theorem*}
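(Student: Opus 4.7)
The plan is to translate the multiplicity-freeness question into a combinatorial injectivity question and then handle the two implications separately. The expected combinatorial engine is the expansion $\key_\alpha=\sum_{T} \fund_{\wt(T)}$, where $T$ ranges over the (quasi-)Kohnert-type tableaux of shape $\alpha$ and $\wt(T)$ is the weak composition recording the descent-based weight of $T$; this is the natural refinement paralleling the Schur-to-fundamental expansion used in Theorem~\ref{thm:BvW13}. In this language, $\key_\alpha$ is multiplicity free in the fundamental slide basis exactly when the map $T\mapsto \wt(T)$ is injective on this set. The whole argument then reduces to identifying precisely which local configurations in $\alpha$ force a collision $\wt(T)=\wt(T')$ with $T\neq T'$.

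For the necessity direction, I would produce, for each of the forbidden patterns (a), (b), (c), a small explicit $\alpha$ realizing only that pattern together with two Kohnert-type tableaux of shape $\alpha$ sharing the same weak composition weight. Pattern (a), the strictly increasing triple $\alpha_i<\alpha_j<\alpha_k$, is the analogue of the pattern governing the Schur-level classification, and the same kind of row-swap of a single cell between rows $j$ and $k$ (compensated from row $i$) should yield two distinct tableaux with identical descent statistic. For (b) and (c), I would show that the extra fourth row $\alpha_l$ provides an additional degree of freedom enabling a two-step move that again preserves both the total row weights and the descent positions, producing another collision. Once these three families of explicit collisions are in place, any $\alpha$ containing one of the forbidden sub-patterns inherits the collision by embedding its witnessing rows, which handles necessity.

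For the sufficiency direction, I would lean on the recursive algorithm for enumerating the fundamental slide terms that the paper promises and induct on the length $\ell(\alpha)$, peeling off the last part $\alpha_\ell$ and tracking how the multiset of weak composition indices grows at each step. The goal is to prove that avoiding (a), (b), (c) guarantees that the new indices produced at each recursion step are pairwise distinct from one another and from the indices already present. The main obstacle, and the technical heart of the argument, will be this sufficiency step: condition (c), which involves the equality $\alpha_k=\alpha_l$ and the strict inequality $\alpha_j+1<\alpha_k$, is quite delicate because the border case $\alpha_j+1=\alpha_k$ must remain \emph{permitted}, and showing that no other local configuration can force a collision will likely require a careful case-analysis on the relative ordering of the parts surrounding any attempted swap, together with a verification that the recursion never revisits the same descent composition through two different branches.
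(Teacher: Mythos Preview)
Your outline captures the overall shape of the argument---translate to injectivity of $T\mapsto\wt(T)$ on $\QKT(\alpha)$, treat necessity by exhibiting collisions, treat sufficiency by induction---but there are genuine gaps in both halves.

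\medskip

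\textbf{Necessity.} The assertion that ``any $\alpha$ containing one of the forbidden sub-patterns inherits the collision by embedding its witnessing rows'' is not valid as stated. A pair of colliding tableaux in $\QKT(\alpha_i,\alpha_j,\alpha_k)$ does not automatically lift to a pair in $\QKT(\alpha)$: the intervening rows of $\alpha$ may obstruct the moves you want, and the quasi-Yamanouchi condition (v) is a global constraint. The paper does \emph{not} argue by embedding. Instead, for each pattern it first selects extremal indices inside the given $\alpha$ (e.g.\ smallest $k$, largest $j$, etc.) so as to control what lies between them, and then constructs the two colliding tableaux directly in $\QKT(\alpha)$, using the observation that rows strictly longer than all rows under consideration can be passed through freely. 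This selection of extremal indices and the accompanying case analysis is real work that your plan omits.

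\medskip

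\textbf{Sufficiency.} Your proposed induction, ``peeling off the last part $\alpha_\ell$'' and tracking how new terms appear via the recursive algorithm, does not match either the algorithm or the paper's argument. The recursive algorithm in the paper reduces $\mathrm{inv}(\alpha)$ by swapping an adjacent ascent $(\alpha_i,\alpha_{i+1})$, not by dropping $\alpha_\ell$; and removing $\alpha_\ell$ gives no clean relationship between $\QKT(\alpha)$ and $\QKT(\alpha_1,\dots,\alpha_{\ell-1})$, since entries labelled $\ell$ may sit in many rows. The paper's actual induction is on $\ell(\alpha)$ but runs differently: given two tableaux $T_1,T_2\in\QKT(\alpha)$ of the same weight, one proves their \emph{first rows} are identical, then deletes the first row and subtracts $1$ from all entries to obtain tableaux in $\QKT(\beta)$ for a strong composition $\beta$ of length $\ell-1$ that still avoids (a), (b), (c). The core difficulty---and the part your outline does not address---is the claim that equal weight forces equal first row. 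This requires a structural analysis of which labels can reach row $1$ under the hypotheses (a), (b), (c), splitting on whether the first ascent occurs at $i=1$ or $i>1$, and in the $i=1$ case a fairly delicate block decomposition of the rows longer than $\alpha_1$. Your plan gives no indication of how this step would be carried out.
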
}
More precisely our paper is structured as follows. In Section~\ref{sec:definition} we recall the relevant background before classifying when a key \svw{polynomial} is equal to a fundamental slide polynomial in Theorem~\ref{thm:k=f} and the sum of two \svw{fundamental slide polynomials} in Theorem~\ref{thm:2terms}. We then restrict our attention to key polynomials indexed by strong compositions and their multiplicity free expansion into fundamental slide polynomials in \svw{Section~\ref{sec:MFforStrong},} generalizing Theorem~\ref{thm:BvW13}. In particular we classify this in Theorem~\ref{thm:main2} and give an algorithm to  compute the terms in general in Subsection~\ref{subsec:alg}. In Section~\ref{sec:MFforWeak} we work towards a full classification, including when a key polynomial whose index has two nonzero parts is a multiplicity free expansion of fundamental slide polynomials in Subsection~\ref{subsec:two nonzero parts}.

\

\textbf{Acknowledgments} The authors would like to thank Sami Assaf for posing the question and for many helpful conversations. They would also like to thank \svw{the referee for their care and excellent suggestions, and} the Korea Institute for Advanced Study where some of this research took place. 

%%%%%%%%%%%%%%%%%%%%%%%%%%%%%%%%%%%
\section{Tableaux and polynomials} \label{sec:definition}

In this section we recall the background needed in order to state and prove our results.

A \emph{weak composition}  $\a=(a_1, \dots, a_\ell)$ of \emph{length $\ell$} is a finite sequence of nonnegative integers, and a  \emph{strong composition} $\alpha=(\alpha_1, \dots, \alpha_\ell)$ of \emph{length $\ell$} is a finite sequence of positive integers. For a weak composition $\a$, we let $\mathrm{flat}(\a)$ be the strong composition obtained by removing zeros from $\a$ and $\mathrm{sort}(\a)$ be the \svw{\emph{partition}} obtained by rearranging the parts of $\mathrm{flat}(\a)$ into weakly decreasing order. For example, if $\a = (0,0,2,3)$ then $\mathrm{flat}(\a) = (2,3)$ and $\mathrm{sort}(\a) = (3,2)$. Given two weak compositions  $\a=(a_1, \dots, a_\ell)$ and  $\b=(b_1, \dots, b_\ell)$ we say that \emph{$\b$ dominates $\a$} denoted by $\b\geq\a$, if $b_1+\cdots+b_i\geq a_1+\cdots+a_i$ for all $i=1, \dots, \ell$. Given two strong compositions $\alpha=(\alpha_1, \dots, \alpha_\ell)$ and $\beta=(\beta_1, \dots, \beta_{\ell'})$, we say that \emph{$\beta$ refines $\alpha$} if there is a sequence $0=i_0< i_1< i_2<\cdots<i_\ell=\ell'$ such that $\beta_{i_{j-1}+1}+\cdots+\beta_{i_j}=\alpha_j$ for $j=1, \dots, \ell$.

A \emph{diagram} is a finite subset of  $\mathbb N\times \mathbb N$ where the coordinate $(i, j)$ represents the cell at the $i$th row from the bottom and the $j$th column from the left.  

We now come to two central definitions, which define the tableaux that we will be using.

\begin{definition}\label{def:(Q)KT}
For a weak composition $\a=(a_1, \dots, a_\ell)$, a \emph{Kohnert tableau of content $\a$} is a diagram filled with $a_i$ $i$s for $i=1, \dots, \ell$, satisfying the following conditions:
 \begin{enumerate}
 \item[(i)] For each $i=1, \dots, \ell$, there is exactly one $i$ in each column from $1$ through $a_i$.
 \item[(ii)] Each entry in row $i$ is at least $i$  for all $i$.
 \item[(iii)] For each $i=1, \dots, \ell$, the cells filled with $i$ weakly descend from left to right. 
 \item[(iv)] If $i<j$ appear in a column with $i$ above $j$, then there is an $i$ in the column immediately to the right of and strictly above \svw{the cell containing the} $j$.
 \end{enumerate}

A Kohnert tableau of content $\a$ is called  a \emph{quasi-Yamanouchi} Kohnert tableau if it satisfies an additional condition:
 \begin{enumerate}
 \item[(v)] If a row is not empty, say the $i$th row, then
     \begin{enumerate} 
      \item[-] either there is a cell filled with $i$ in the $i$th row,
      \item[-] or there is a cell in the $(i+1)$th row that lies weakly right of a cell in the $i$th row.
    \end{enumerate}
 \end{enumerate}
\end{definition}

\begin{definition}\label{def:QKTset}
For a weak composition $\a=(a_1, \dots, a_\ell)$, let $\KT(\a)$ be the set of Kohnert tableaux  \svw{and}  $\QKT(\a)$ be the set of quasi-Yamanouchi Kohnert tableaux of content $\a$. For $T\in \KT(\a)$ we let $\wt(T)$ be the weak composition whose $i$th part is the number of cells in the $i$th row of $T$.
The \emph{basic} quasi-Yamanouchi Kohnert tableau of content $\a$ is the unique Kohnert tableau $T$ with $a_i$ cells  in row $i$ filled with $i$ for $i=1, \dots, \ell$, namely $\wt(T) = \a$.  
\end{definition}

\begin{example}\label {ex:QKT} Figure~\ref{fig:QKT(0032)} illustrates some Kohnert tableaux that are quasi-Yamanouchi. Note that the leftmost one is basic. Figure~\ref{fig:KT(0032)} illustrates some \svw{Kohnert tableaux that are not quasi-Yamanouchi.}
\end{example}

\begin{figure}[ht]
\begin{center}
    \begin{tikzpicture}[xscale=1.5,yscale=2] 
     \node at (0,0)  {$ \vline\tableau{4&4\\3&3&3 \\ \\ \\ \hline} $};
     \node at (2,0)  {$ \vline\tableau{4\\3&4\\ & 3&3 \\ \\ \hline} $};
     \node at (4,0)  {$ \vline\tableau{4\\3&3&3\\ & 4 \\ \\ \hline} $};
     \node at (6,0)  {$ \vline\tableau{ \\3&3&3\\ 4 & 4 \\  \\ \hline} $};
     \node at (8,0)  {$ \vline\tableau{\\3&3\\ 4& &3 \\ &4&\\ \hline} $};
\end{tikzpicture}
  \end{center}
  \caption{\label{fig:QKT(0032)} Quasi-Yamanouchi Kohnert tableaux of content $(0, 0,3, 2)$.}
\end{figure}
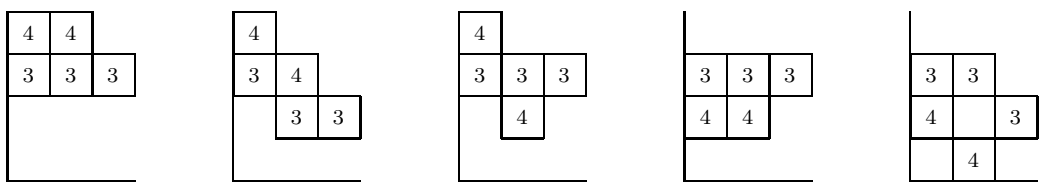

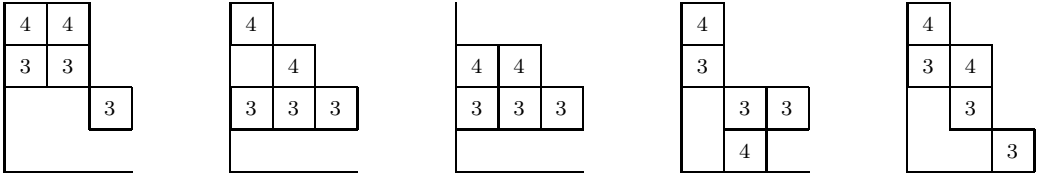
\begin{figure}[ht]
\begin{center}
    \begin{tikzpicture}[xscale=1.5,yscale=2] 
     \node at (0,0)  {$\vline\tableau{4&4\\3&3\\ & & 3 \\ \\ \hline}$};
     \node at (2,0)  {$ \vline\tableau{4\\&4\\ 3& 3&3 \\ \\ \hline} $};
     \node at (4,0)  {$ \vline\tableau{\\4&4\\3&3&3\\ \\ \hline} $};
     \node at (6,0)  {$ \vline\tableau{4\\3\\&3&3\\  & 4 \\   \hline} $};
     \node at (8,0)  {$ \vline\tableau{4\\3&4\\ &3 \\ & &3\\ \hline} $};
\end{tikzpicture}
  \end{center}
  \caption{\label{fig:KT(0032)} Some Kohnert tableaux of content $(0, 0,3, 2)$ that are not quasi-Yamanouchi.}
\end{figure}

For a weak composition $\b=(b_1, \dots, b_\ell)$, we use $x^\b$ to denote  the monomial $x_1^{b_1}  \cdots x_\ell^{b_\ell}$. We are now ready to define key polynomials, fundamental slide polynomials, each of their stable limits, and the relationship between them.

\begin{definition}\label{def:keyfund} For a weak composition  $\a$ of length $\ell$, 
\begin{enumerate}
\item the \emph{key polynomial} $\key_{\a}=\key_{\a}(x_1, \dots, x_\ell)$ is defined as  
\begin{equation*}
\key_\a=\sum_{T\in \KT(\a)} x^{\wt(T)}\,,
\end{equation*}
\item the \emph{fundamental slide polynomial} $\fund_{\a}=\fund_{\a}(x_1, \dots, x_\ell)$ is defined as 
\begin{equation*} 
\fund_{\a}=\sum_{\stackrel{\b\geq \a}{ \mathrm{flat}(\b) \text{ refines } \mathrm{flat}(\a)}} x^\b\,. 
\end{equation*}
\end{enumerate}
\end{definition}

\svw{If we let $s_\lambda$ denote the Schur function indexed by the partition $\lambda$ and $F_\alpha$ denote the fundamental quasisymmetric function indexed by the strong compsition $\alpha$, then we have the following.}
\begin{theorem}[\cite{AS18}] \label{thm:limit} 
For a weak composition $\a$, we have
\begin{eqnarray}
     \lim_{m\rightarrow \infty} \key_{0^m\times \a}=s_{\mathrm{sort}(\a)}\,,\\
     \lim_{m\rightarrow \infty} \fund_{0^m\times \a}=F_{\mathrm{flat}(\a)}\,,
\end{eqnarray}\svw{where $0^m\times \a$ means prepending $m$ $0$s to $\a$.}
\end{theorem}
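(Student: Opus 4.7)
The two limits will be addressed separately. The slide polynomial limit follows essentially from the combinatorial definitions, while the key polynomial limit requires a more intricate stabilization argument using the combinatorics of Kohnert tableaux.

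For the slide polynomial limit, I unpack Definition~\ref{def:keyfund}(2): $\fund_{0^m\times\a}$ is the sum of $x^\b$ over weak compositions $\b$ of length $m+\ell$ with $\b\geq 0^m\times\a$ in dominance and $\mathrm{flat}(\b)$ refining $\mathrm{flat}(\a)$, while $F_{\mathrm{flat}(\a)}$ admits the analogous description as a sum of $x^\b$ over all weak compositions $\b$ (in infinitely many variables) with $\mathrm{flat}(\b)$ refining $\mathrm{flat}(\a)$. Every monomial of $\fund_{0^m\times\a}$ thus appears in $F_{\mathrm{flat}(\a)}$. For the converse, fix any such $\b$ supported on positions $\{1,\dots,N\}$: once $m\geq N$, the partial sums of $0^m\times\a$ vanish at indices $\leq m$ (trivially dominated by those of $\b$), while at indices $>m$ the partial sums of $\b$ have already saturated at $|\a|$ and dominate those of $\a$. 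Hence $\b\geq 0^m\times\a$ holds, so $x^\b$ appears in $\fund_{0^m\times\a}$ for all sufficiently large $m$, and coefficientwise convergence gives $\fund_{0^m\times\a}\to F_{\mathrm{flat}(\a)}$.

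For the key polynomial limit, the strategy is a coefficient-by-coefficient comparison. Fix a weak composition $\b$ with $|\b|=|\a|$; I would show that for $m$ sufficiently large, the number of $T\in\KT(0^m\times\a)$ with $\wt(T)=(\b,0,\dots,0)$ equals the number of semistandard Young tableaux of shape $\mathrm{sort}(\a)$ with content $\b$. The structural observation is that condition (i) of Definition~\ref{def:(Q)KT} pins down the column structure of any $T\in\KT(0^m\times\a)$: column $j$ contains one cell of each label $m+i$ with $a_i\geq j$, so the multiset of column lengths equals the conjugate of $\mathrm{sort}(\a)$, leaving only the row positions of the cells as free parameters. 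The natural correspondence sends $T$ to the filling of the Young diagram of $\mathrm{sort}(\a)$ whose column-$j$ entries list, top-to-bottom in increasing order, the row indices of the cells in column $j$ of $T$; a cell in row $i$ of $T$ becomes an entry $i$ of the filling, so weights correspond immediately. Combined with the Schur identity $s_\lambda=\sum_U x^{\wt(U)}$, stabilization of this correspondence to a bijection for large $m$ yields the first limit.

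The main obstacle is establishing this correspondence as a bijection asymptotically. In the forward direction, one must show that the induced filling is semistandard: strict column increase is automatic from distinctness of rows within a column, but weak row increase must be extracted from conditions (iii) (weak descent of equally labeled cells) and (iv) (diagonal constraint for columns with stacked distinct labels) of Definition~\ref{def:(Q)KT}. In the reverse direction, given a semistandard $U$ with entries bounded by $m+\ell$, one constructs $T$ by placing cells at the positions dictated by the entries of $U$ and assigning labels so that value $m+k$ occupies columns $1,\dots,a_k$ with weakly descending rows (as forced by condition (iii)), then verifies condition (iv). Because $\key_{0^m\times\a}$ is not symmetric at finite $m$, the bijection fails for small $m$; stability emerges only once $m$ is large enough that condition (ii) (entry in row $i$ is at least $i$) ceases to restrict the low-row placements required by arbitrary semistandard fillings. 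I would complete the argument by induction on columns from right to left, using the semistandardness of $U$ to propagate condition (iv).
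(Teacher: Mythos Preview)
The paper does not prove Theorem~\ref{thm:limit}; it is quoted from Assaf--Searles~\cite{AS18} (their Theorem~4.4 and Corollary~4.9) and used as a black box. There is therefore no proof in the present paper against which to compare your proposal.

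On your proposal itself: the argument for the fundamental slide limit is correct and complete. For the key polynomial limit, however, the specific bijection you describe is incorrect, even in the stable range. Take $\a=(1,2)$, any $m\geq 3$, and target weight $\b=(1,1,1)$. One valid $T\in\KT(0^m\times\a)$ has cells at positions $(2,1)$, $(3,1)$, $(1,2)$ (row, column), with label $m+1$ at $(2,1)$ and label $m+2$ at $(3,1)$ and $(1,2)$; all four Kohnert conditions are satisfied. Your map records the sorted row multisets columnwise: column~$1$ gets $(2,3)$ and column~$2$ gets $(1)$, so the first row of the resulting filling of shape $(2,1)$ reads $(2,1)$, which is not weakly increasing. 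Thus your forward map does not land in semistandard tableaux. The counts do match (there are exactly two Kohnert tableaux of this content and weight, and two SSYT of shape $(2,1)$ and content $(1,1,1)$), but the correct correspondence is not ``sort the row indices in each column''; the labeling of $T$ carries information that your map discards. A combinatorial bijection along these lines can be made to work, but it requires a more careful rule than the one you state, and your claim that the inverse labeling is ``forced by condition~(iii)'' is likewise not justified, since condition~(iii) constrains cells of the \emph{same} label across columns and says nothing about how to distribute distinct labels within a single column.
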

%We take the following as the definition of \emph{key polynomials} while there are many ways to define them.

\begin{proposition}[Theorem 2.13, \cite{AS18}] \label{prop:key2slide}
For a weak composition $\a$, the \emph{key polynomial indexed by $\a$} is $$\key_\a=\sum_{T\in \QKT(\a)} \fund_{\wt(T)}\,.$$
\end{proposition}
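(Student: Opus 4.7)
The approach is to construct a destandardization map $\Phi \colon \KT(\a) \to \QKT(\a)$ whose fibers have the right weight generating functions. Specifically, I would show that for each $T_0 \in \QKT(\a)$ with $\wt(T_0) = \gamma$, one has $\sum_{T \in \Phi^{-1}(T_0)} x^{\wt(T)} = \fund_\gamma$. Summing over $T_0$ then yields
$$\key_\a \;=\; \sum_{T \in \KT(\a)} x^{\wt(T)} \;=\; \sum_{T_0 \in \QKT(\a)} \sum_{T \in \Phi^{-1}(T_0)} x^{\wt(T)} \;=\; \sum_{T_0 \in \QKT(\a)} \fund_{\wt(T_0)},$$
as desired.

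To define $\Phi$, I would iteratively perform a downward sliding operation. Given a Kohnert tableau that is not quasi-Yamanouchi, choose the smallest row index $i$ for which condition (v) of Definition~\ref{def:(Q)KT} fails: row $i$ is non-empty, contains no entry equal to $i$, and no cell in row $i+1$ lies weakly right of any cell in row $i$. Since every entry in row $i$ is then strictly greater than $i$, the cells of row $i$ may be slid down into the first available positions in their columns at lower rows. Conditions (i) and (iii) of Definition~\ref{def:(Q)KT} are preserved because the moves are purely vertical within each column; condition (ii) is preserved because the entries are strictly larger than the destination rows; and condition (iv) is preserved by the assumption that row $i+1$ has no cell weakly right of any cell in row $i$. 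Iterating this procedure terminates in a quasi-Yamanouchi tableau $\Phi(T)$.

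For the fiber $\Phi^{-1}(T_0)$, I would describe its elements as those obtained by reversing the sliding process: starting from $T_0$, lift subsets of cells upward along columns in ways compatible with conditions (i)--(iv). A bookkeeping argument then matches such lifts bijectively, via $T \mapsto \wt(T)$, with weak compositions $\b$ satisfying $\b \geq \gamma$ and $\mathrm{flat}(\b)$ refining $\mathrm{flat}(\gamma)$. The dominance $\b \geq \gamma$ records that cells have only moved up, while the refinement condition reflects the fact that contiguous runs of equal entries in a single row of $T_0$ may be split across higher rows but never merged in a way inconsistent with $\mathrm{flat}(\gamma)$.

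The main obstacle is the precise description of the fibers: verifying that every weak composition $\b$ in the index set of $\fund_\gamma$ is realized by a unique preimage under $\Phi$, with no extraneous preimages, requires careful attention to the column constraints (i) and (iv) and to how they interact with the refinement condition on flattened weights. This combinatorial matching is the heart of the argument and mirrors the strategy used by Assaf-Searles in \cite{AS18}.
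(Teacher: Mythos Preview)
The paper does not give its own proof of this proposition; it is quoted from Assaf--Searles \cite{AS18} and used as a black box. Your overall strategy---define a destandardization map $\Phi\colon \KT(\a)\to\QKT(\a)$ and show that each fiber has weight generating function $\fund_{\wt(T_0)}$---is exactly the argument in \cite{AS18}.

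However, your description of $\Phi$ has the direction reversed. When condition~(v) fails at row $i$ (row $i$ nonempty, no entry equal to $i$, and no cell of row $i+1$ weakly right of any cell of row $i$), the cells of row $i$ should be slid \emph{up} into row $i+1$, not down. The hypothesis that row $i+1$ has nothing weakly right of row $i$ is precisely what guarantees those positions in row $i+1$ are vacant; it says nothing about availability in lower rows. Condition~(ii) is then preserved because every entry in row $i$ is $\geq i+1$ (since no entry equals $i$), allowing the move to row $i+1$. Correspondingly, the fiber over $T_0$ consists of tableaux obtained from $T_0$ by pushing cells \emph{down}, not up: if $\b=\wt(T)$ and $\gamma=\wt(T_0)$, then $\b\geq\gamma$ in dominance means $T$ carries more weight in low-index rows than $T_0$, which is consistent with $T$ arising from $T_0$ by downward moves. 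As written, your slide would not land in a quasi-Yamanouchi tableau, and the sentence ``$\b\geq\gamma$ records that cells have only moved up'' is backwards. Once the direction is corrected throughout, the rest of your outline matches the Assaf--Searles proof.
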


\begin{example}\label{ex:keyasfund} In Figure~\ref{fig:QKT(0032)}, all quasi-Yamanouchi Kohnert tableaux of content $(0,0,3,2)$ are given, and 
due to Proposition~\ref{prop:key2slide} we have
$$\key_{(0,0,3,2)}=\fund_{(0,0,3,2)}+ \fund_{(0,2,2,1)}+\fund_{(0,1,3,1)}+\fund_{(0,2,3,0)}+\fund_{(1,2,2,0)}\,.$$ 
\end{example}
%%%%%%%%%%%%%%%%%%%%%%%%%%%%%%%%%%%
\section{Initial results} \label{sec:initial}

We now prove some initial multiplicity free classifications, which will be useful later. In particular we classify when  a key polynomial is equal to a fundamental slide polynomial or the sum of two fundamental slide polynomials. In each case we give the precise decomposition.

%%%%%%%
\subsection{Classifying when $\key_\a=\fund_\a$}\label{subsec:oneterm}

Before establishing our classification we prove \svwii{four} lemmas.

\begin{lemma} \label{lem:1}
Let $\a=(a_1, \dots, a_k, \underbrace{0,\dots, 0}_m, a_{k+m+1})$ for positive integers $a_1\geq\cdots\geq a_k$,  a nonnegative integer $a_{k+m+1}\leq a_k$ and a nonnegative integer $m$. Then $\key_\a=\fund_\a$.
\end{lemma}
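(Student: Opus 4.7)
The plan is to invoke Proposition~\ref{prop:key2slide} and show that $\QKT(\a)$ consists only of the basic quasi-Yamanouchi Kohnert tableau $T_0$, which has weight $\a$; this gives $\key_\a=\fund_\a$ at once.

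I would begin by pinning down the positions of cells with entries $i\le k$. The claim is that in every $T\in\KT(\a)$, each $i$-cell lies in row $i$ and together they fill columns $1,\ldots,a_i$. Condition (i) fixes the columns, condition (ii) confines $i$-cells to rows $\le i$, and an induction on $i$ rules out any row $r<i$: by $a_1\ge\cdots\ge a_k$, any column $c\le a_i$ satisfies $c\le a_r$, so cell $(r,c)$ is already occupied by an $r$-cell by the inductive hypothesis. Next I would locate the $(k+m+1)$-cells. By (i) they fill columns $1,\ldots,a_{k+m+1}$, and by (ii) they sit in rows $\le k+m+1$. The same column-occupancy argument, using $a_{k+m+1}\le a_k$, forbids rows $\le k$, so these cells lie in rows $k+1,\ldots,k+m+1$; reading their rows from left to right as $r_1\ge r_2\ge\cdots\ge r_{a_{k+m+1}}$ records condition (iii).

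The crux will then be to show every $r_i=k+m+1$, via the quasi-Yamanouchi condition (v). For any $r$ with $k+1\le r\le k+m$ the content has no $r$s, so no cell in row $r$ is filled with $r$; therefore (v) applied to a nonempty such row demands a cell in row $r+1$ that is weakly right of its rightmost cell. If $r_1<k+m+1$, then row $r_1+1$ is empty, since entries $\le k$ stay in rows $\le k$ and no $(k+m+1)$-cell lies above $r_1$, a contradiction. Iterating, let $s$ be the largest index with $r_s=k+m+1$ and assume for contradiction that $r_{s+1}<k+m+1$; with $t$ the largest index satisfying $r_t=r_{s+1}$, the rightmost cell of row $r_{s+1}$ sits in column $t>s$. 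A short case analysis on whether $r_{s+1}+1=k+m+1$ or $r_{s+1}+1<k+m+1$ shows that row $r_{s+1}+1$ either holds $(k+m+1)$-cells only in columns $1,\ldots,s$ or else is empty; in either case no cell in row $r_{s+1}+1$ lies weakly right of column $t$, so (v) fails on row $r_{s+1}$. Hence $T=T_0$, as required.

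The main obstacle is this last quasi-Yamanouchi propagation argument; the earlier steps are essentially bookkeeping dictated by the hypotheses $a_1\ge\cdots\ge a_k$ and $a_{k+m+1}\le a_k$ on $\a$.
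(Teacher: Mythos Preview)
Your proof is correct and follows exactly the paper's approach—show that the basic tableau is the unique element of $\QKT(\a)$ and then invoke Proposition~\ref{prop:key2slide}—only you supply the details the paper leaves implicit. One small slip: condition~(v) asks for a cell in row $r+1$ weakly right of \emph{some} cell (equivalently, the leftmost cell) of row $r$, not the rightmost; this does not affect your argument, since in each case you actually establish that row $r_{s+1}+1$ is either empty or has all its cells in columns $\le s$, strictly left of every cell of row $r_{s+1}$.
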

\begin{proof}
The basic quasi-Yamanouchi Kohnert tableau of content $\a$ is the unique element of  $\QKT(\a)$. Due to Proposition~\ref{prop:key2slide}, we can conclude that $\key_\a=\fund_\a$.
\end{proof}

\svwM{\begin{lemma} \label{lem:1point5}
Let $\a=(a_1, \dots, a_k, \overline{a}_{01}, 1)$ for positive integers $a_1\geq\cdots\geq a_k>1$   and $\overline{a}_{01}$ a finite sequence of $0$s and $1$s. Then $\key_\a=\fund_\a$.
\end{lemma}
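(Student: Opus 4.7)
By Proposition~\ref{prop:key2slide}, it suffices to show that $\QKT(\a)$ contains only the basic quasi-Yamanouchi Kohnert tableau, from which $\key_\a=\fund_\a$ follows exactly as in Lemma~\ref{lem:1}.

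First I would show that in every $T \in \KT(\a)$ the labels $1, \ldots, k$ are frozen in their basic positions, i.e., label $i$ occupies row $i$ in every column in which it appears. Since each tail label $s > k$ has $a_s \le 1$, condition (i) of Definition~\ref{def:(Q)KT} confines every tail label to column $1$; so in every column $c$ the set of main-body labels present is an initial segment $\{1, \ldots, i_c\}$ of $\{1, \ldots, k\}$ (using that the $a_i$ are weakly decreasing). Condition (ii) forces label $i$ into some row $\le i$, and since the rows used in a given column are distinct, a short induction on $i$ gives label $i$ in row $i$.

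With the main body pinned down, only the tail labels $S = \{s>k : a_s = 1\}$ (always containing $k+m+1$) remain to be placed. Writing $S = \{s_1 < \cdots < s_p\}$, each $s_j$ occupies a single cell in column $1$ at some row $r_j$ with $k+1 \le r_j \le s_j$. Condition (iv) prohibits two tail labels from appearing out of order in column $1$ (since no tail label appears in column $2$ to witness the configuration), so $r_1 < r_2 < \cdots < r_p$. The basic tableau is exactly the configuration $r_j = s_j$ for every $j$.

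The main obstacle is to show that the quasi-Yamanouchi condition (v) pins down $r_j = s_j$ for every $j$. The plan is to assume otherwise and let $i^*$ be the largest index with $r_{i^*} < s_{i^*}$. Row $r_{i^*}$ is then non-empty, contains the label $s_{i^*} \ne r_{i^*}$, and since rows above $k$ can only be occupied by tail labels, row $r_{i^*}+1$ is non-empty precisely when $r_{i^*}+1 = r_{i^*+1}$. If $i^* = p$ there is no $r_{i^*+1}$, so (v) fails outright. Otherwise the maximality of $i^*$ gives $r_{i^*+1} = s_{i^*+1}$, and the strict inequalities $s_{i^*+1} > s_{i^*} > r_{i^*}$ force $r_{i^*+1} \ge r_{i^*}+2$, so row $r_{i^*}+1$ is again empty. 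Either way (v) fails, contradicting $T \in \QKT(\a)$. The careful interplay of conditions (ii)--(v) in this final step is the real content; once established, the lemma is immediate.
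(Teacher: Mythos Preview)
Your argument is correct and follows exactly the same strategy as the paper: both reduce to showing $\QKT(\a)=\{T_\a\}$ and then invoke Proposition~\ref{prop:key2slide}. The paper simply asserts this uniqueness without justification, whereas you supply the full verification---the pigeonhole argument pinning labels $1,\dots,k$ to rows $1,\dots,k$, the use of condition~(iv) to force $r_1<\cdots<r_p$, and the contradiction with~(v) at the topmost displaced tail label. One cosmetic slip: the index ``$k+m+1$'' is notation from Lemma~\ref{lem:1}; here you mean the final index $\ell$ of $\a$.
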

\begin{proof}
The basic quasi-Yamanouchi Kohnert tableau of content $\a$ is the unique element of  $\QKT(\a)$. Due to Proposition~\ref{prop:key2slide}, we can conclude that $\key_\a=\fund_\a$.
\end{proof}}

\begin{lemma} \label{lem:2}
If a weak composition $\a=(a_1,   \dots, a_\ell)$  has two nonzero entries $0<a_i<a_j$ with $i<j$, then the key polynomial $\key_\a$  has at least two different terms in its expansion into  fundamental slide polynomials.  
\end{lemma}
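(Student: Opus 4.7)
The plan is to apply Proposition~\ref{prop:key2slide}, which gives
\[
\key_\a = \sum_{T \in \QKT(\a)} \fund_{\wt(T)},
\]
and to exhibit two quasi-Yamanouchi Kohnert tableaux of content $\a$ with distinct weights. Distinct weak compositions yield distinct fundamental slide polynomials, since $x^\b$ is the unique dominance-minimal monomial of $\fund_\b$ by Definition~\ref{def:keyfund}, so QKTs of distinct weights contribute distinct summands. The basic QKT contributes $\fund_\a$, so it suffices to produce one further $T \in \QKT(\a)$ with $\wt(T) \neq \a$.

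For the construction, I would let $p^*$ be the smallest index with $p^* > i$ and $a_{p^*} = a_j$; this exists and satisfies $i < p^* \leq j$ (choosing $p^* = j$ always works if nothing smaller does). Obtain $T$ from the basic filling by moving the cell at $(p^*, a_j)$, which contains $p^*$, down to $(i, a_j)$; the target cell is empty because $a_i < a_j$. The resulting weight differs from $\a$ by $+1$ at position $i$ and $-1$ at position $p^*$, so $\wt(T) \neq \a$.

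Checking conditions (i)--(iii) and (v) of Definition~\ref{def:(Q)KT} is routine: the column multiset of label $p^*$ is unchanged; the new $p^*$ sits in row $i < p^*$; the row sequence of the $p^*$'s across columns $1, \ldots, a_j$ is $p^*, p^*, \ldots, p^*, i$, which is weakly decreasing; and rows $i$ and $p^*$ each still contain an entry equal to their row index (row $p^*$ keeps $a_j - 1 \geq 1$ copies of $p^*$, since $a_j > a_i \geq 1$).

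The main obstacle is condition (iv), which needs to be rechecked only in column $a_j$ since this is the only column where an entry has been displaced from its label row. Any offending ``smaller-above-larger'' pair in column $a_j$ must therefore involve the displaced $p^*$ sitting in row $i$. The subtle case is a label $q$ with $i < q < p^*$ and $a_q \geq a_j$: then $q$ lies in row $q > i$, above $p^*$ at row $i$, so condition (iv) demands an entry $q$ in column $a_j + 1$ strictly above row $i$. This is precisely where the minimality of $p^*$ is decisive---it excludes $a_q = a_j$, forcing $a_q > a_j$, so a $q$ does appear in column $a_j + 1$ at row $q > i$. Every other entry in column $a_j$ has its label below the displaced $p^*$, triggering no further instance of (iv), so $T$ is a valid QKT and the lemma follows.
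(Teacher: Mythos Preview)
Your proof is correct, and the overall strategy---exhibit a second quasi-Yamanouchi Kohnert tableau of weight different from $\a$---is the same as the paper's. The construction, however, differs. The paper first \emph{rechooses} the pair: it replaces $(i,j)$ by a pair with $0<a_i<a_j$ and $j-i$ minimal, which forces $a_{i+1}=\cdots=a_{j-1}=0$. Then moving the last cell of row $j$ down to row $i$ creates no inversion at all in column $a_j$ (every other entry $r$ in that column has $r<i$ or $r>j$), so condition (iv) is vacuous and the verification is a single line. Your approach instead keeps $i$ and the value $a_j$ fixed, moves a cell from the lowest row $p^*>i$ with $a_{p^*}=a_j$, and uses the minimality of $p^*$ to force $a_q>a_j$ for any intervening label $q$, which supplies the required witness in column $a_j+1$ for condition (iv). Both work; the paper's shortcut trades a bit of freedom in the choice of $(i,j)$ for a trivial (iv)-check, while yours shows how to build a second tableau from \emph{any} witnessing pair.

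One wording quibble: your final sentence ``Every other entry in column $a_j$ has its label below the displaced $p^*$'' is not literally true---entries with label $q>p^*$ sit \emph{above} the displaced $p^*$. What you mean is that such entries have larger label above smaller label, hence do not trigger (iv); and entries with label $q<i$ sit below. The case split is right, only the phrasing is off.
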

\begin{proof}  Choose $i<j$ with $0<a_i<a_j$ so that $(j-i)$ is minimal. Then $a_{i+1}=\cdots=a_{j-1}=0$, and moving the last cell of row $j$ of the basic quasi-Yamanouchi Kohnert tableau down to row $i$ produces another tableau in $\QKT(\a)$.  \end{proof}

\begin{lemma} \label{lem:3}
Suppose that a weak composition $\a=(a_1,   \dots, a_\ell)$  has two nonzero entries $a_i, a_j$  with $i<j$ and at least one of $a_i$ and $a_j$ is strictly bigger than $1$. If there is $h<i$ such that $a_h=0$,  then the key polynomial $\key_\a$ has at least two different terms in its expansion into  fundamental slide polynomials.  Moreover, if $a_i\neq 1$ then one term $\fund_{(b_1, \ldots , b_\ell)}$ has $b_h\neq 0$.
\end{lemma}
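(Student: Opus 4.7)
The plan is to invoke Proposition~\ref{prop:key2slide}, which expresses $\key_\a=\sum_{T\in\QKT(\a)}\fund_{\wt(T)}$. Since the basic quasi-Yamanouchi Kohnert tableau $T_0$ satisfies $\wt(T_0)=\a$, it suffices to produce a second QKT $T'\in\QKT(\a)$ with $\wt(T')\neq\a$; moreover, when $a_i\neq 1$ we also require $\wt(T')_h\neq 0$, since $a_h=0$ then forces the weights to differ in position $h$.

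I would begin by disposing of the case $a_i=1$. The hypothesis that at least one of $a_i,a_j$ exceeds $1$ then forces $a_j\geq 2$, so $0<a_i<a_j$ and Lemma~\ref{lem:2} directly yields the required second QKT of distinct weight; the ``moreover'' clause is vacuous in this case. For the principal case $a_i\geq 2$, I would construct $T'$ explicitly from $T_0$ by shifting cells downward so as to place content into row $h$. A first attempt is to take $h$ to be the largest index less than $i$ with $a_h=0$ and to move the rightmost cell $(i,a_i)$ of row $i$ down to $(h,a_i)$. When this single move fails, I would instead perform a cascade: letting $r_1<r_2<\cdots<r_k$ enumerate the rows $r$ with $a_{r_q}=a_i$ (so $i$ is among them), I would simultaneously shift $(r_q,a_i)\to(r_{q-1},a_i)$ for $q\geq 2$ and $(r_1,a_i)\to(r_1-1,a_i)$. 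If this still does not reach row $h$, I would iterate analogous shifts in other columns until some cell lands in row $h$.

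Conditions (i), (ii), and (iii) of Definition~\ref{def:(Q)KT} follow readily, since in each case only rightmost cells of each content are moved and they descend in row index, preserving both the set of columns occupied by each content and the row--content inequalities. The main obstacle is to verify the intra-column condition (iv) together with the quasi-Yamanouchi condition (v). For (iv), the shifted contents in column $a_i$ form a strictly ascending sequence from bottom to top, and any intermediate unshifted cell of content $r$ at row $h<r<i$ with $a_r>a_i$ carries a supporting cell at $(r,a_i+1)$ that resolves the potential inversion with the moved cell at $(h,a_i)$; the rows $r$ with $a_r=a_i$ are precisely the ones participating in the cascade, so no uncovered inversion remains. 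For (v), the cell freshly placed in row $r_1-1$ (or $h$) must either match its row index (impossible when $a_{r_1-1}=0$) or be witnessed by a cell in the row immediately above weakly to the right; this witness is exactly the cell of content $r_2$ that the cascade places at $(r_1,a_i)$. Verifying both conditions in each subcase then yields the desired $T'$ with $\wt(T')_h\neq 0$, completing the argument.
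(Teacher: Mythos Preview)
Your reduction of the $a_i=1$ case to Lemma~\ref{lem:2} matches the paper. The $a_i\ge 2$ case, however, has a genuine gap: the construction you describe does not produce a quasi-Yamanouchi Kohnert tableau in simple instances, and the fallback ``iterate analogous shifts in other columns'' is too vague to repair it.

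Take $\a=(0,2,1)$, so $h=1$, $i=2$, $j=3$, $a_i=2$. Your first attempt moves $(2,2)\to(1,2)$. Now row~$1$ has a single cell in column~$2$ with entry~$2$, and row~$2$ has a single cell in column~$1$. Condition~(v) fails for row~$1$: its entry is not~$1$, and row~$2$ has no cell weakly right of column~$2$. Your cascade does nothing new here, since $i=2$ is the only row with $a_r=a_i$ (so $k=1$, there is no $r_2$, and the witness you invoke for~(v) does not exist). ``Iterating in other columns'' cannot help either: the only other column with a movable cell is column~$1$, but row~$2$ already occupies $(2,1)$. More fundamentally, your construction never uses $j$, yet the existence of $a_j$ is exactly what rescues~(v): one needs a cell from row $j$ to either land in row $h$ itself or to fill the hole left in row $i$.

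The paper avoids these complications by first \emph{reselecting} $h,i,j$: it takes $h$ minimal with $a_h=0$ and $a_{h+1}\neq 0$, sets $i=h+1$, and takes $j$ to be the next index making one of $a_i,a_j$ exceed~$1$. This guarantees $a_k=0$ for $i<k<j$, so a single clean move suffices: if $a_i\le a_j$, push the last cell of row~$i$ to row~$h$ and slide $j-i+1$ cells of row~$j$ down to row~$i$; if $a_i>a_j$, push the last cell of row~$j$ (which sits in column $a_j\le a_i-1$) straight down to row~$h$, where row~$i$ still extends past it and supplies the witness for~(v). In the example above this gives the tableau with a~$3$ at $(1,1)$ under the intact row~$2$. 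Exploiting the freedom to choose specific $h,i,j$ is the missing idea in your argument.
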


\begin{proof} Let $h$ be the smallest index such that $a_h=0$ but $a_{h+1}\not=0$, and let $i=h+1$.  Then, by assumption we  can find the \emph{smallest}  $j>h+1$ with $a_j>0$ so that at least one of  $a_i, a_j$  is strictly bigger than $1$.  There are three cases to consider.

If \svw{$1=a_i< a_j$,} then the result follows by Lemma~\ref{lem:2}.

If $1<a_i\leq a_j$, then $a_k=0$ for all $k=i+1, \dots, j-1$. Therefore, moving the last $i$ in row $i$ down to row $i-1=h$ and the last $j-i+1$ entries in row $j$ down to row $i$ from the basic quasi-Yamanouchi Kohnert tableau will produce another tableau in $\QKT(\a)$. See Figure~\ref{fig:ai=aj}. %, since there must be at least one $i$ in row $i$ of the resulting tableau.  See Figure~\ref{fig:ai=aj}.

\begin{figure}[ht]
\begin{center}
    \begin{tikzpicture}[xscale=2,yscale=2] 
       \node at (0,0)  {$ \vline\tableau{\\j&\cdots &j&j&\cdots&j\\ \\ i&\cdots &i&i\\ \\ \\ \hline} $};
     \node at (2,0)  {$ \vline\tableau{\\j&\cdots &j\\ \\ i&\cdots &i&j&\cdots&j\\ & & &i\\ \\ \hline} $};
     \node at (-0.95,0.43) {\tiny$\mathrm j$};\node at (-0.95,-0.14) {\tiny$\mathrm i$};\node at (-0.95,-0.43) {\tiny$\mathrm h$};
\node at (1.05,0.43) {\tiny$\mathrm j$};\node at (1.05,-0.14) {\tiny$\mathrm i$};\node at (1.05,-0.43) {\tiny$\mathrm h$};
\end{tikzpicture}
  \end{center}
  \caption{\label{fig:ai=aj} Two quasi-Yamanouchi Kohnert tableaux when $1<a_i\leq a_j$ and $a_h=0$.}
\end{figure}
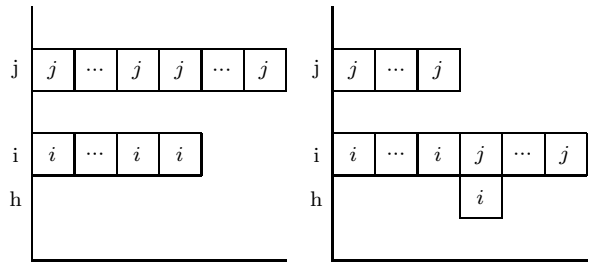
 
If \svw{$a_i>a_j\geq1$,} then  $a_k=0$ for all $k=i+1, \dots, j-1$. Therefore, moving the last $j$ in row $j$ down to row $i-1=h$ from the basic quasi-Yamanouch Kohnert tableau will produce another tableau in $\QKT(\a)$.  See Figure~\ref{fig:ai>aj}.
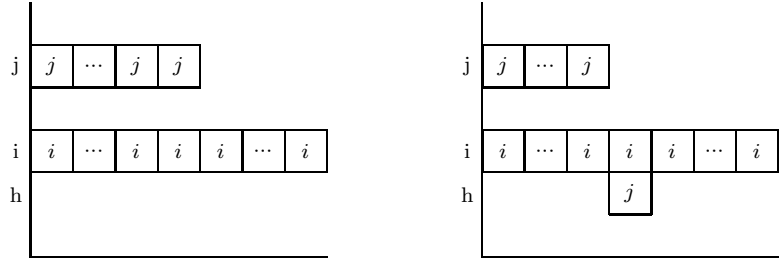
\begin{figure}[ht]
\begin{center}
    \begin{tikzpicture}[xscale=3,yscale=2] 
       \node at (0,0)  {$ \vline\tableau{\\j&\cdots &j&j\\ \\ i&\cdots &i&i&i&\cdots&i\\ \\ \\ \hline} $};
     \node at (2,0)  {$ \vline\tableau{\\j&\cdots &j\\ \\ i&\cdots &i&i&i&\cdots&i\\  & &  & j & & \\ \\ \hline} $};
     \node at (-0.72,0.43) {\tiny$\mathrm j$};\node at (-0.72,-0.14) {\tiny$\mathrm i$};\node at (-0.72,-0.43) {\tiny$\mathrm h$};
\node at (1.28,0.43) {\tiny$\mathrm j$};\node at (1.28,-0.14) {\tiny$\mathrm i$};\node at (1.28,-0.43) {\tiny$\mathrm h$};
\end{tikzpicture}
  \end{center}
  \caption{\label{fig:ai>aj} Two quasi-Yamanouchi Kohnert tableaux when $a_i>a_j\geq 1$ and $a_h=0$.}
\end{figure}

Finally note that in both the $a_i\neq 1$ cases row $h$ has a cell in it.
\end{proof}

\begin{theorem}\label{thm:k=f}
For a weak composition $\a$, the key polynomial $\key_\a$ has a unique term in the expansion into fundamental slide polynomials, that is $\key_\a=\fund_\a$, if and only if $\a$ is one of the following weak compositions.
\begin{enumerate}
\item Every  nonzero part of $\a$ is $1$.
\item $\a$ has only one nonzero part.
\item   $\a=(a_1, \dots, a_k, \underbrace{0,\dots, 0}_m, a_{k+m+1})$ for positive integers $a_1\geq\cdots\geq a_k$, a nonnegative integer $a_{k+m+1}\leq a_k$ and a nonnegative integer $m$.
\item \svwM{$\a=(a_1, \dots, a_k, \overline{a}_{01}, 1)$ for positive integers $a_1\geq\cdots\geq a_k>1$   and $\overline{a}_{01}$ a finite sequence of $0$s and $1$s.}
\end{enumerate}
\end{theorem}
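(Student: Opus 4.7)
My plan is to invoke Proposition \ref{prop:key2slide}, which expresses $\key_\a = \sum_{T \in \QKT(\a)} \fund_{\wt(T)}$. The basic quasi-Yamanouchi Kohnert tableau always lies in $\QKT(\a)$ with weight $\a$, and the fundamental slide polynomials are linearly independent, so $\key_\a = \fund_\a$ is equivalent to $\QKT(\a)$ consisting solely of the basic tableau. The proof then splits into showing sufficiency and necessity of this combinatorial condition on $\QKT(\a)$.

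For the sufficient direction ($\Leftarrow$), cases (3) and (4) are handled by Lemmas \ref{lem:1} and \ref{lem:1point5}. For case (1), where every nonzero part of $\a$ equals $1$, condition (i) of Definition \ref{def:(Q)KT} confines each cell to column $1$, and condition (iv) forces the labels in column $1$ to appear in strictly increasing order from bottom to top (since no column to the right is available to witness a violation); a top-down induction using the quasi-Yamanouchi condition (v) paired with the row bound from condition (ii) pins each label $i$ to row $i$, leaving only the basic tableau. For case (2), where $\a$ has a single nonzero part $a_k$, the sole label $k$ fills columns $1, \ldots, a_k$ in weakly descending rows by conditions (i) and (iii); if some cell occupies a row $r < k$, then weak descent places every cell in row $r+1$ strictly to the left of every cell in row $r$, so condition (v) fails at the smallest non-empty row, and every cell must in fact sit in row $k$.

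For the necessary direction ($\Rightarrow$), suppose $\a$ is not in any of cases (1)--(4). Then $\a$ has at least two nonzero parts, at least one of which exceeds $1$. If the nonzero parts of $\a$ are not weakly decreasing, Lemma \ref{lem:2} produces a second element of $\QKT(\a)$; if $\a$ has a leading zero, Lemma \ref{lem:3} does the same, since the first nonzero part of $\a$ is the largest and hence exceeds $1$, furnishing the required hypothesis. Otherwise $a_1 > 0$ and the nonzero parts are weakly decreasing; let $k_*$ be the largest index with $a_{k_*} > 1$, so that $a_i \in \{0, 1\}$ for all $i > k_*$. A case split on the number of nonzero entries beyond position $k_*$ then shows that when at most one such entry exists, $\a$ fits the form of case (3), and when two or more exist (necessarily all equal to $1$ by weak monotonicity), $\a$ fits the form of case (4), contradicting the assumption either way.

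The main obstacle will be the final structural case split in the necessity direction: Lemma \ref{lem:3} requires one of its straddling nonzero entries to exceed $1$, a hypothesis that fails exactly when all nonzero parts beyond the initial $>1$ run equal $1$, so Lemma \ref{lem:3} cannot be invoked directly in that regime. The delicate point is recognising that this very regime is precisely what case (4) carves out, so that the four cases of the theorem together close off every remaining possibility without needing a new combinatorial construction of a non-basic quasi-Yamanouchi Kohnert tableau.
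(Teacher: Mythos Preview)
Your sufficiency argument is fine and matches the paper's; the problem is in the necessity direction. In the final case you assume that once $a_1>0$ and the nonzero parts are weakly decreasing, defining $k_*$ as the \emph{largest index} with $a_{k_*}>1$ forces $\a$ into case~(3) or case~(4). This tacitly assumes that the prefix $a_1,\dots,a_{k_*}$ contains no zeros, i.e.\ that it is the ``initial $>1$ run'' you refer to. That is not guaranteed. Take $\a=(3,0,2,0,1)$: here $a_1>0$, the nonzero parts $3,2,1$ are weakly decreasing, and $k_*=3$; there is exactly one nonzero entry beyond position $k_*$, yet $\a$ is not of the form in case~(3) (the initial block $a_1,\dots,a_k$ must be positive, and the internal zero at position~$2$ blocks every choice of $k$), nor is it in case~(4). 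The analogous example $\a=(3,0,2,1,1)$ defeats your case~(4) branch for the same reason.

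The repair is straightforward and is in fact how the paper uses Lemma~\ref{lem:3}: that lemma only needs some zero $a_h$ followed by two nonzero entries $a_i,a_j$ with one of them exceeding~$1$; it does \emph{not} require $h$ to be a leading zero. So in the weakly-decreasing situation you should invoke Lemma~\ref{lem:3} whenever there exists \emph{any} zero preceding two later nonzero parts one of which exceeds~$1$. What remains after that is precisely the configurations where every zero is followed by at most one nonzero part or only by $1$'s, and then (modulo trailing zeros) your case split into (3) and (4) goes through. In short, strengthen your appeal to Lemma~\ref{lem:3} from ``leading zero'' to ``any zero with two suitable nonzero parts to its right,'' and the argument closes.
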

\begin{proof} It is easy to check that if either (1) or (2) is the case, then $\key_\a=\fund_\a$, and Lemma~\ref{lem:1},  \svwM{Lemma~\ref{lem:1point5},} Lemma~\ref{lem:2} and  Lemma~\ref{lem:3} complete the proof.
\end{proof}

\begin{example}\label{ex:k=f} If $\a = (3,0,0,2)$, then $\key _{(3,0,0,2)}=\fund _{(3,0,0,2)}$.
\end{example}

\begin{corollary}\label{cor:one_term}
For a strong composition $\alpha$, $\key_\alpha=\fund_\alpha$ if and only if $\alpha$ is a partition.
\end{corollary}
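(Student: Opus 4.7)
The plan is to deduce this corollary directly from Theorem~\ref{thm:k=f} by imposing the strong composition constraint (every part positive) on each of the four listed cases and observing that in every case the result collapses to a partition.

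For the forward direction, suppose $\key_\alpha=\fund_\alpha$ for a strong composition $\alpha$. By Theorem~\ref{thm:k=f}, $\alpha$ must fall into one of the four families. First I would handle Cases (1) and (2) quickly: in (1) the strong composition assumption forces $\alpha=(1,1,\dots,1)$, and in (2) it forces $\alpha$ to have a single (positive) part; both are partitions. Then I would treat Case (3): the hypothesis allows $m$ interior zeros, but $\alpha$ being strong forces $m=0$, so $\alpha=(a_1,\dots,a_k,a_{k+1})$ with $a_1\geq\cdots\geq a_k\geq a_{k+1}\geq 0$, and again the strong condition makes $a_{k+1}>0$, producing a partition. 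Finally I would treat Case (4): the segment $\overline{a}_{01}$ consists of $0$s and $1$s, but no $0$s are allowed in a strong composition, so $\overline{a}_{01}$ is a (possibly empty) string of $1$s, giving $\alpha=(a_1,\dots,a_k,1,\dots,1)$ with $a_1\geq\cdots\geq a_k>1$, which is a partition.

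For the converse, I would observe that any partition $\alpha=(\alpha_1,\dots,\alpha_\ell)$ with $\alpha_1\geq\cdots\geq\alpha_\ell>0$ fits into Case (3) of Theorem~\ref{thm:k=f} with $k=\ell-1$, $m=0$, and $a_{k+m+1}=\alpha_\ell\leq\alpha_{\ell-1}=a_k$. Hence Theorem~\ref{thm:k=f} yields $\key_\alpha=\fund_\alpha$.

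There is essentially no obstacle here; the only care needed is to make sure the four cases really do cover all possibilities, which is exactly what Theorem~\ref{thm:k=f} guarantees, and to verify that in each case the positivity of every part forces the weakly decreasing shape required of a partition.
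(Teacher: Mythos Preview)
Your proposal is correct and follows exactly the intended route: the paper states this corollary immediately after Theorem~\ref{thm:k=f} with no separate proof, so your explicit case-by-case specialization of that theorem to strong compositions is precisely what the authors leave implicit. The only trivial edge case you might mention for completeness in the converse is $\ell=1$, where $k=\ell-1=0$ makes the reference to $a_k$ awkward, but a one-part partition already falls under Case~(2).
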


%%%%%%%
\subsection{Classifying when $\key_\a=\fund_\a+\fund_\b$}\label{subsec:twoterm}

We will first deal with the case of a strong composition and then apply \svwii{this case} to obtain the full result. We begin with two lemmas.

\begin{lemma}\label{lem:two}
For a strong composition $\alpha$, $\fund_\alpha$ and $\fund_{\mathrm{sort}(\alpha)}$ are always \svw{(not necessarily distinct)} terms in $\key_\alpha$.
\end{lemma}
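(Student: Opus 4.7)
The plan is to apply Proposition~\ref{prop:key2slide}, which writes $\key_\alpha = \sum_{T \in \QKT(\alpha)} \fund_{\wt(T)}$, and to exhibit two tableaux in $\QKT(\alpha)$ with row weights $\alpha$ and $\mathrm{sort}(\alpha)$. The first is already at hand: the basic quasi-Yamanouchi Kohnert tableau $T_0$ of Definition~\ref{def:QKTset} has $\wt(T_0) = \alpha$, producing the summand $\fund_\alpha$ in the expansion.

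For the summand $\fund_{\mathrm{sort}(\alpha)}$, I would set $\lambda = \mathrm{sort}(\alpha)$ and build $T_1$ by filling the Young diagram of $\lambda$ column by column: in column $j$, place the elements of the set $\{i : \alpha_i \geq j\}$ in increasing order from bottom to top. Because $\alpha$ is a strong composition, the size of this set is exactly $\lambda'_j$, so the filling fits the shape $\lambda$ precisely and $\wt(T_1) = \lambda$ by construction.

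It then remains to check that $T_1$ satisfies conditions (i)--(v) of Definition~\ref{def:(Q)KT}. Condition (i) is immediate, since each letter $i$ occupies columns $1, \ldots, \alpha_i$ exactly once. Condition (ii) holds because the $i$-th smallest element of a set of positive integers is at least $i$. For (iii), as $j$ grows the set $\{i : \alpha_i \geq j\}$ only loses elements; losing an element less than $i$ decreases the rank of $i$ in the set, while losing a larger one leaves it unchanged, so the row of the $i$s weakly drops as $j$ grows. Condition (iv) is vacuous because columns strictly increase from bottom to top. Finally, (v) holds because $\{i : \alpha_i \geq 1\} = \{1, \ldots, \ell\}$ places $i$ in row $i$ in column $1$, so every nonempty row $i$ contains an $i$. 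The cases where $T_0 = T_1$ are precisely when $\alpha$ is itself a partition, matching the ``not necessarily distinct'' clause; I expect no real obstacle here, since the whole argument reduces to a concrete construction plus a routine verification of the five defining conditions.
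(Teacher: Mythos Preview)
Your argument is correct and is essentially the paper's own proof: the tableau $T_1$ you construct is precisely what the paper calls ``bottom justifying each column [of the basic tableau] with its entries written in increasing order from bottom to top,'' since column $j$ of the basic tableau contains exactly the set $\{i : \alpha_i \geq j\}$. You have simply supplied the detailed verification of conditions (i)--(v) that the paper leaves implicit.
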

\begin{proof} $\fund_\alpha$ comes from the basic \svwii{quasi-Yamanouchi} Kohnert tableau. $\fund_{\mathrm{sort}(\alpha)}$ comes from the \svwii{quasi-Yamanouchi} Kohnert tableau obtained from the basic one by bottom justifying each column with its entries written in increasing order from bottom to top.
\svwii{If $\alpha = \mathrm{sort}(\alpha)$,} then these two \svwii{quasi-Yamanouchi} Kohnert tableaux are the same tableau.\end{proof}

\begin{definition} \label{def:inv}For a strong composition $\alpha=(\alpha_1,   \dots, \alpha_\ell)$, a pair $(i,  j)$,  $i<j$, is an \emph{inversion} of $\alpha$ if $0< \alpha_i<\alpha_j$, and we let $\mathrm{inv}(\alpha)$ be the number of inversions of $\alpha$.
\end{definition}

\begin{example}\label{ex:inv} If $\alpha = (2,3)$, then $\mathrm{inv}(\alpha) = 1$.
\end{example}

\begin{lemma}\label{lem:expansion}
If $\mathrm{inv}(\alpha)=1$, then the unique inversion must be the pair $(i, i+1)$ for some $i$. If $\alpha=(\alpha_1, \dots, \alpha_{i-1}, \alpha_i, \alpha_i+m, \alpha_{i+2}, \dots, \alpha_\ell)$ where  $m>0$ and $\alpha_1\geq \cdots \geq \alpha_i \geq \alpha_{i+2}\geq\cdots \geq \alpha_\ell$, is a strong composition with only one inversion, then 
\[\key_\alpha=\sum_{t=0}^m \fund_{(\alpha_1,  \dots,  \alpha_{i-1}, \alpha_i+t,  \alpha_i+m-t,  \alpha_{i+2}, \dots,  \alpha_\ell)}\,.\]
\end{lemma}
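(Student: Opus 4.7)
The plan is to handle the two parts separately. For the first claim — that $\mathrm{inv}(\alpha) = 1$ forces the unique inversion to be consecutive — I will argue by contradiction. If the sole inversion is $(i, j)$ with $j > i+1$, pick any intermediate index $k$; since $\alpha$ is strong $\alpha_k \geq 1$, and comparing $\alpha_k$ with $\alpha_i$ and $\alpha_j$ yields either the inversion $(i, k)$ (if $\alpha_k > \alpha_i$) or the inversion $(k, j)$ (if $\alpha_k \leq \alpha_i < \alpha_j$), a contradiction.

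For the expansion formula, the plan is to apply Proposition~\ref{prop:key2slide} by explicitly enumerating $\QKT(\alpha)$. The hypothesis $\mathrm{inv}(\alpha) = 1$ combined with the given monotonicity implies $\alpha_{i-1} \geq \alpha_{i+1} = \alpha_i + m$, since otherwise $(i-1, i+1)$ would be a second inversion. I will then analyze Kohnert tableaux of content $\alpha$ column by column: in column $c$, the set of labels present is exactly $\{k : \alpha_k \geq c\}$, and condition (ii) of Definition~\ref{def:(Q)KT} restricts label $k$ to row $\leq k$. An induction on $k$ shows that in every column, all labels $k \neq i+1$ are pinned to their natural row $k$, and $(i+1)$ is pinned to row $i+1$ in every column $c \leq \alpha_i$. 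The sole freedom is the placement of $(i+1)$ in the $m$ columns $\alpha_i + 1, \ldots, \alpha_i + m$, where each such $(i+1)$ can lie in row $i$ (since row $i$ is empty there) or in row $i+1$. Condition (iii) then forces the cells pushed down to row $i$ to form a right-justified block, so the Kohnert tableaux are parametrized by $t \in \{0, \ldots, m\}$ with $\wt(T_t) = (\alpha_1, \ldots, \alpha_{i-1}, \alpha_i + t, \alpha_i + m - t, \alpha_{i+2}, \ldots, \alpha_\ell)$. A routine check that (iv) and (v) hold for each $T_t$ (every column reads monotonically from bottom to top with at most a single gap, and rows $i$ and $i+1$ always contain their own label whenever nonempty) confirms $\QKT(\alpha) = \{T_0, \ldots, T_m\}$, and Proposition~\ref{prop:key2slide} delivers the claimed expansion.

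The main obstacle I expect is the column-by-column forcing step — specifically, ruling out any label $k \neq i+1$ migrating out of row $k$. The key leverage is the extended monotonicity, which guarantees that in every column containing label $k$ (for $k \neq i+1$), all of labels $1, 2, \ldots, k-1$ are also present; an induction then fills rows $1, \ldots, k-1$ with those labels and forces $k$ into row $k$, isolating the $(i+1)$'s as the only moveable entries.
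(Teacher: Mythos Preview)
Your proposal is correct and follows essentially the same approach as the paper: both arguments establish the first claim by noting that a non-consecutive inversion forces a second one via an intermediate index, and both obtain the expansion by showing that every Kohnert tableau of content $\alpha$ arises from the basic one by sliding some rightmost suffix of the $(i{+}1)$'s down to row $i$, then observing all of these are quasi-Yamanouchi. The paper's proof is a two-sentence assertion of this enumeration, whereas you supply the column-by-column pinning argument (using $\alpha_{i-1}\ge\alpha_{i+1}$ and the weak decrease elsewhere to force every label $k\neq i{+}1$ into row $k$) that actually justifies it; this is the right way to make the paper's claim rigorous.
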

\begin{proof} 
If $\alpha_i\geq \alpha_{i+1}$ for all $i$, then $\mathrm{inv}(\alpha)=0$. Hence, if $\mathrm{inv}(\alpha)=1$ then $\alpha_i<\alpha_{i+1}$ must hold for some $i$ and this must be the unique such $i$, plus $\alpha_{i+1}\leq \alpha_k$ for all $1\leq k\leq i-1$.

All Kohnert tableaux are the ones obtained by moving cells in row $(i+1)$ down to row $i$ one by one from the rightmost cell and these are all quasi-Yamanouchi, which proves the given equation.
\end{proof}

We remark that the weight of $T\in \QKT(\alpha)$ for a strong composition $\alpha$, is a strong composition since row $i$ of the first column of any Kohnert tableau of content $\alpha$ is filled with $i$. Hence we know that if $\fund_\a$ appears in the expansion of $\key_\alpha$, then $\a$ must be a strong composition of the same length as $\alpha$.

\begin{proposition} \label{prop:2termsstrong} For a strong composition $\alpha$, $\key_\alpha=\fund_\alpha+\fund_\beta$ if and only if $\mathrm{inv}(\alpha)=1$ and $\alpha_{i+1}=\alpha_i+1$ for the unique inversion $(i, i+1)$ of $\alpha$. Furthermore, $\beta=\mathrm{sort}(\alpha)$ in this case.
\end{proposition}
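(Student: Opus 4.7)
The plan is to prove both directions, with the reverse an almost immediate consequence of Lemma~\ref{lem:expansion} and the forward requiring a case analysis based on $\mathrm{inv}(\alpha)$. For the reverse direction, assuming $\mathrm{inv}(\alpha) = 1$ with unique inversion $(i, i+1)$ and $\alpha_{i+1} = \alpha_i + 1$, I would apply Lemma~\ref{lem:expansion} with $m = 1$ to obtain
\[
\key_\alpha = \fund_\alpha + \fund_{(\alpha_1, \dots, \alpha_{i-1}, \alpha_i + 1, \alpha_i, \alpha_{i+2}, \dots, \alpha_\ell)}.
\]
To identify the second index as $\mathrm{sort}(\alpha)$, I would check that the new composition is weakly decreasing: uniqueness of the inversion forces $(i-1, i+1)$ and $(i, i+2)$ to be non-inversions, giving $\alpha_{i-1} \geq \alpha_{i+1} = \alpha_i + 1$ and $\alpha_i \geq \alpha_{i+2}$, while the remaining consecutive inequalities $\alpha_j \geq \alpha_{j+1}$ for $j \notin \{i-1, i, i+1\}$ follow from $(j, j+1)$ not being an inversion. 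Since $\alpha_i < \alpha_{i+1}$ we have $\alpha \neq \mathrm{sort}(\alpha)$, so the two resulting terms are genuinely distinct.

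For the forward direction, I would argue by contrapositive, splitting on $\mathrm{inv}(\alpha)$. If $\mathrm{inv}(\alpha) = 0$, then $\alpha$ is a partition and Corollary~\ref{cor:one_term} yields a single term. If $\mathrm{inv}(\alpha) = 1$ with consecutive gap $m = \alpha_{i+1} - \alpha_i \geq 2$, then Lemma~\ref{lem:expansion} delivers $m + 1 \geq 3$ distinct terms. The remaining case, $\mathrm{inv}(\alpha) \geq 2$, is the main obstacle: here Lemma~\ref{lem:two} already provides two distinct terms from the basic and sort quasi-Yamanouchi Kohnert tableaux of weights $\alpha$ and $\mathrm{sort}(\alpha)$, and the plan is to exhibit a third QKT whose weight differs from both.

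To construct such a third QKT, I observe that since $\alpha$ is not weakly decreasing there must be a consecutive inversion $(p, p+1)$, and since $\mathrm{inv}(\alpha) \geq 2$ there must be at least one further inversion $(q, r)$. I propose a ``hybrid'' of the basic and sort tableaux: carry out the rightmost-cell move of row $p+1$ into row $p$, as in the proof of Lemma~\ref{lem:expansion}, while leaving all other columns in their basic configuration. This alters the weight only in coordinates $p$ and $p+1$ while preserving the other inversion in the diagram, so the resulting weight agrees with $\mathrm{sort}(\alpha)$ at $\{p, p+1\}$ but with $\alpha$ at $\{q, r\}$, and is therefore distinct from both. The main technical difficulty I anticipate is verifying that this locally modified diagram still satisfies all of Definition~\ref{def:(Q)KT}, particularly conditions (iv) and (v), in the presence of the untouched second inversion; I expect this to require a short sub-case analysis on the relative positions of $(p, p+1)$ and $(q, r)$, and possibly on whether $\alpha_{p+1} - \alpha_p$ itself exceeds $1$ (in which case Lemma~\ref{lem:expansion}'s argument already applies to that single pair and furnishes the third QKT directly).
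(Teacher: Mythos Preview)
Your overall strategy is sound and close in spirit to the paper's: both directions are handled the same way (Lemma~\ref{lem:expansion} for the reverse, explicit extra QKTs for the forward), and the contrapositive case-split on $\mathrm{inv}(\alpha)$ is a perfectly reasonable organization. The verification that the single-cell move from row $p+1$ to row $p$ in the basic tableau yields a valid QKT is indeed routine, since every row still contains its own label in column~1, so condition~(v) is automatic.

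There is, however, a genuine flaw in your justification that the hybrid weight $\gamma$ differs from $\mathrm{sort}(\alpha)$. You write that $\gamma$ ``agrees with $\mathrm{sort}(\alpha)$ at $\{p,p+1\}$ but with $\alpha$ at $\{q,r\}$'', and neither half is true in general. For the first: moving one cell gives $(\gamma_p,\gamma_{p+1})=(\alpha_p+1,\alpha_{p+1}-1)$, which has nothing to do with the $p$th and $(p+1)$th largest parts of $\alpha$; e.g.\ for $\alpha=(1,2,1,2)$ with $p=1$ you get $\gamma=(2,1,1,2)$ while $\mathrm{sort}(\alpha)=(2,2,1,1)$. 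For the second: the extra inversion $(q,r)$ may well overlap $\{p,p+1\}$, so $\gamma$ need not agree with $\alpha$ there. The correct argument is simply that $\gamma$ is not weakly decreasing: if $\alpha_{p+1}-\alpha_p\ge 2$ you can instead move one cell and two cells to get two distinct non-basic weights (at most one of which can equal $\mathrm{sort}(\alpha)$); if $\alpha_{p+1}=\alpha_p+1$ then $\gamma$ is the adjacent transposition of $\alpha$ at $(p,p+1)$, and since an adjacent transposition removes exactly one inversion of a composition, $\mathrm{inv}(\gamma)=\mathrm{inv}(\alpha)-1\ge 1$, so $\gamma\neq\mathrm{sort}(\alpha)$.

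For comparison, the paper does not split on the value of $\mathrm{inv}(\alpha)$. It argues directly: assuming exactly two terms, Lemma~\ref{lem:two} forces $\beta=\mathrm{sort}(\alpha)$; taking the smallest consecutive inversion $(i,i+1)$, one shows $\alpha_{i+1}=\alpha_i+1$ (else three terms by sliding cells), then $\alpha_{i+2}\le\alpha_i$ (else three terms by sliding in rows $i,i+1,i+2$), and finally that no later consecutive inversion can exist (each would independently produce extra terms). This local, position-by-position analysis avoids the need to reason about $\mathrm{sort}(\alpha)$ globally, which is exactly the step that tripped up your argument.
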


\begin{proof}
Suppose that $\key_\alpha=\fund_\alpha+\fund_\beta$ for some strong composition $\beta$. Then, by  Lemma~\ref{lem:two}, $\beta$ must be $\mathrm{sort}(\alpha)$ and $\mathrm{inv}(\alpha)>0$. Let $i$ be the smallest index such that $(i, i+1)$ is an inversion of $\alpha$. Then $\alpha_{i+1}=\alpha_i+1$ must hold, since otherwise at least three terms will appear in the expansion of $\key_\alpha$ into fundamental slide polynomials by moving cells in row $i+1$ down to row $i$ one by one from the rightmost cell. Now, if $\alpha_{i+2}>\alpha_i$ then either $\alpha_{i+2}>\alpha_{i+1}$ or $\alpha_{i+2}=\alpha_{i+1}$ must hold and in either case, by moving cells in row $i+1$ down to row $i$ and then moving cells in row $i+2$ down to row $i+1$, there are at least three terms in the expansion of $\key_\alpha$. Hence we have $\alpha_{i+2}\leq \alpha_i$.  \svw{Since if there is $j\geq i+2$ such that $(j, j+1)$ is an inversion of $\alpha$, then this will make at least two more terms in the expansion of  $\key_\alpha$,  we} can conclude that the only inversion of $\alpha$ is $(i, i+1)$ and $\alpha_{i+1}=\alpha_i+1$.

The proof of the other direction is immediate from Lemma~\ref{lem:expansion}.
\end{proof}

\begin{example} \label{ex:2termsstrong} If $\alpha = (2,3)$, then $\key _{(2,3)} = \fund _{(2,3)} + \fund _{(3,2)}$.
\end{example}

We now apply our classification for strong compositions to obtain our full classification. We also need the following generalization of $\mathrm{sort}(\a)$. For a weak composition $\a$ we let $\mathrm{sort}_0(\a)$ be the weak composition whose nonzero parts are the parts of $\mathrm{sort}(\a)$ taken in weakly decreasing order, and the $i$th part of $\mathrm{sort}_0(\a)$ is 0 if and only if the $i$th part of $\a$ is 0. For example, if $\a = (0,2,0,3)$ then $\mathrm{sort}_0(\a)= (0,3,0,2)$.

\begin{theorem} \label{thm:2terms} For a weak composition $\a$,  $\key_\a=\fund_\a+\fund_\b$ if and only if $\a$ satisfies both the following conditions:
\begin{enumerate}
\item[(a)] $\mathrm{inv}(\mathrm{flat}(\a))=1$ and $\mathrm{flat}(\a)_{i+1}=\mathrm{flat}(\a)_i+1$ for the unique inversion $(i, i+1)$ of $\mathrm{flat}(\a)$, 
\item[(b)] \svwM{either 
\begin{itemize}
\item $\mathrm{flat}(\a) = (1,2)$, or 
\item $\a=(a_1, \dots, a_k, \underbrace{0,\dots, 0}_m, a_{k+m+1})$ for positive integers \svwM{$a_1,\ldots ,a_k$,} a nonnegative integer $a_{k+m+1}$ and a nonnegative integer $m$, or 
\item $\a=(a_1, \dots, a_k, \overline{a}_{01}, 1)$ for positive integers $a_1, \ldots, a_k>1$   and $\overline{a}_{01}$ a finite sequence of $0$s and $1$s.\end{itemize}}
\end{enumerate}
Furthermore, $\b=\mathrm{sort}_0(\a)$ in this case.
\end{theorem}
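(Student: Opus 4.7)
The plan is to treat both directions by combining Proposition~\ref{prop:2termsstrong} (the strong-composition case) with the weak-composition analog of Lemma~\ref{lem:two}, together with Lemma~\ref{lem:3}-style constructions that detect extra quasi-Yamanouchi Kohnert tableaux forced by zeros. The common starting point is the following observation: whenever $\a$ satisfies (a), the basic tableau $T_0\in\QKT(\a)$ has weight $\a$, and the tableau $T_1$ obtained from $T_0$ by sliding the rightmost cell of the row housing the larger value of the unique inversion of $\mathrm{flat}(\a)$ down into the row housing the smaller value also lies in $\QKT(\a)$, with weight $\mathrm{sort}_0(\a)$. When $\mathrm{sort}_0(\a)\neq\a$ these two tableaux are distinct, so both $\fund_\a$ and $\fund_{\mathrm{sort}_0(\a)}$ appear in $\key_\a$; in particular this forces $\b=\mathrm{sort}_0(\a)$ whenever the expansion has exactly two terms.

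For the $(\Leftarrow)$ direction, assume (a) and (b). Then $T_0$ and $T_1$ are two distinct elements of $\QKT(\a)$, so it suffices to prove $|\QKT(\a)|\le 2$. I would handle the three bullets of (b) in turn. The case $\mathrm{flat}(\a)=(1,2)$ is a small direct enumeration. The second bullet, in which $\a$ is a block of positive entries followed by zeros followed by one trailing entry, is handled just as in the proof of Lemma~\ref{lem:1}: any Kohnert move not already producing $T_1$ either breaks condition (iii), forces a column violation of (iv), or creates a row that fails (v). The third bullet, a prefix of positives $>1$ followed by a tail of $0$s and $1$s ending in $1$, is dispatched in the same way using the argument of Lemma~\ref{lem:1point5}.

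For the $(\Rightarrow)$ direction, assume $\key_\a=\fund_\a+\fund_\b$ with the two terms distinct, so $|\QKT(\a)|=2$. The starting observation pins down $\b=\mathrm{sort}_0(\a)$ and forces $\mathrm{inv}(\mathrm{flat}(\a))\ge 1$. For condition (a), one can lift each $T\in\QKT(\mathrm{flat}(\a))$ to a distinct element of $\QKT(\a)$ by re-indexing the $j$th nonzero row of the lift to the $j$th nonzero position of $\a$ and relabeling entries accordingly, so $|\QKT(\mathrm{flat}(\a))|\le|\QKT(\a)|=2$; Proposition~\ref{prop:2termsstrong} then yields the single-consecutive-inversion pattern of (a). For condition (b) I argue by contrapositive: if (a) holds but $\a$ matches none of the three bullets, then some zero of $\a$ sits in a position incompatible with each of the three forms, and a Lemma~\ref{lem:3}-style construction produces a third quasi-Yamanouchi Kohnert tableau distinct from $T_0$ and $T_1$, contradicting $|\QKT(\a)|=2$.

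The main obstacle is the final step: organizing the contrapositive case analysis for (b) so that in each configuration violating (b) one produces an explicit third QKT and verifies conditions (i)--(v). The three bullets of (b) are precisely the configurations in which every such construction is blocked, and enumerating the remaining ``bad'' configurations is what forces their particular form.
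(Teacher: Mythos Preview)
Your proposal is correct and follows essentially the same approach as the paper: lift $\QKT(\mathrm{flat}(\a))\hookrightarrow\QKT(\a)$ to deduce (a) via Proposition~\ref{prop:2termsstrong}, use the weak analogue of Lemma~\ref{lem:two} to pin down $\b=\mathrm{sort}_0(\a)$, invoke Lemma~\ref{lem:3} to force (b), and check directly that (a)$+$(b) yields exactly two tableaux. One caution: in the $(\Rightarrow)$ direction your ``starting observation'' is phrased conditionally on (a), so you should appeal to the unconditional weak Lemma~\ref{lem:two} analogue first; also note that Lemmas~\ref{lem:1} and~\ref{lem:1point5} are one-liners with no reusable argument, so the $|\QKT(\a)|\le 2$ step is a fresh (if easy) verification --- the paper itself is equally terse at that point.
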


\begin{proof} Let $\alpha$ be a strong composition with $\key_\alpha\neq\fund_\alpha+\fund_\beta$ and $\a$ be a weak composition such that $\mathrm{flat}(\a)=\alpha$. Then $\key_\a\neq\fund_\a +\fund_\b$. This is because if $|\QKT(\alpha)| >2$ then by adding empty rows and increasing the numbers in the cells by the number of empty rows inserted below we will produce more than two tableaux in $\QKT(\a)$.

Hence $\mathrm{flat}(\a)$ must satisfy the conditions of Proposition~\ref{prop:2termsstrong}, giving us the first condition. By Lemma~\ref{lem:two} and noting that we can produce members in $\QKT(\a)$ from $\QKT(\alpha)$ by adding empty rows and increasing the numbers in the cells by the number of empty rows inserted below, we know that $\key_\a=\fund_\a+\fund_{\mathrm{sort}_0(\a)}$ plus perhaps other terms. \svwM{To complete the proof note that Lemma~\ref{lem:3} then guarantees that we will have other terms unless $\mathrm{flat}(\a) = (1,2)$, or $\a=(a_1, \dots, a_k, \underbrace{0,\dots, 0}_m, a_{k+m+1})$ for positive integers \svwM{$a_1,\ldots ,a_k$,} a nonnegative integer $a_{k+m+1}$ and a nonnegative integer $m$, or $\a=(a_1, \dots, a_k, \overline{a}_{01}, 1)$ for positive integers $a_1,\ldots, a_k>1$   and $\overline{a}_{01}$ a finite sequence of $0$s and $1$s.}
 In this case, by the definition of quasi-Yamanouchi Kohnert tableaux and Proposition~\ref{prop:key2slide} no further terms are produced, giving us the second condition.
\end{proof}

\begin{example}\label{ex:2terms} If $\alpha = (2,0,0,3)$, then $\key _{(2,0,0,3)} = \fund _{(2,0,0,3)} + \fund _{(3,0,0,2)}$.
\end{example}

%%%%%%%%%%%%%%%%%%%%%%%%%%%%%%%%%%%%%%%%%%%%%%%%%%%%%%%%%%%%%%
\section{Classifying when  $\key_\alpha$ is multiplicity free for $\alpha$ a strong composition}\label{sec:MFforStrong}

In this section, we restrict our attention to strong compositions.

%\begin{theorem}\label{thm:main1}
%Let $\alpha=(\alpha_1, \dots, \alpha_l)$ be a strong composition.
%\begin{enumerate}
%\item If there exist $1<i<j$ such that $\alpha_i<\alpha_j$ and $\alpha_1, \dots, \alpha_{i-1}, \alpha_{i+1},\dots, \alpha_{j-1}<\alpha_i$ then  $\key_\alpha$ is \emph{not} multiplicity free.
%\item If there exist $2<i<j$ such that $\alpha_i>\alpha_j$ and $\alpha_1, \dots, \alpha_{i-1}, \alpha_{i+1},\dots, \alpha_{j-1}<\alpha_j$ then  $\key_\alpha$ is \emph{not} multiplicity free.
%\item If there exist $2<i<j$ such that $\alpha_i=\alpha_j$ and $\alpha_1, \dots, \alpha_{i-1}, \alpha_{i+1},\dots, \alpha_{j-1}<\alpha_i-1$ then  $\key_\alpha$ is \emph{not} multiplicity free.
%\end{enumerate}
%\end{theorem}

\begin{theorem}\label{thm:main1}
Let $\alpha=(\alpha_1, \dots, \alpha_\ell)$ be a strong composition.
\begin{enumerate}
\item If there exist $i<j<k$ such that $\alpha_i<\alpha_j<\alpha_k$, then  $\key_\alpha$ is \emph{not} multiplicity free.
\item If there exist $i<j<k<l$ such that $\alpha_i, \alpha_j<\alpha_l<\alpha_k$, then  $\key_\alpha$ is \emph{not} multiplicity free.
\item If there exist $i<j<k<l$ such that $\alpha_i, \alpha_j+1<\alpha_k=\alpha_l$, then  $\key_\alpha$ is \emph{not} multiplicity free.
\end{enumerate}
\end{theorem}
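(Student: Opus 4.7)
The plan is to prove each of the three non-multiplicity-free cases by exhibiting two distinct tableaux $T,T'\in\QKT(\alpha)$ with $\wt(T)=\wt(T')$. Combined with Proposition~\ref{prop:key2slide}, which writes $\key_\alpha$ as a nonnegative sum of fundamental slide polynomials indexed by weights of quasi-Yamanouchi Kohnert tableaux, the existence of such $T,T'$ forces the coefficient of $\fund_{\wt(T)}$ to be at least $2$, and hence $\key_\alpha$ is not multiplicity free. The basic principle is that a direct downward move of a cell from row $k$ to row $i$ produces the same weight change $e_i-e_k$ as the two-step cascade ``move $k\to j$ first, then $j\to i$'', and an analogous identity handles the other patterns.

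For case~(1) with $\alpha_i<\alpha_j<\alpha_k$, I would obtain $T$ from the basic QKT of content $\alpha$ by moving the $k$-cell at column $\alpha_k$ down to row $i$, and $T'$ by moving that $k$-cell only to row $j$ together with moving the $j$-cell at column $\alpha_j$ down to row $i$; both have weight $\alpha+e_i-e_k$. For case~(2) with $\alpha_i,\alpha_j<\alpha_l<\alpha_k$, I would produce two tableaux of weight $\alpha+e_i+e_j-e_k-e_l$: the first via ``$k$ at column $\alpha_k$ to row $j$ and $l$ at column $\alpha_l$ to row $i$'' and the second via ``$k$ at column $\alpha_k$ to row $i$ and $l$ at column $\alpha_l$ to row $j$'', adapting the second to a cascade in column $\alpha_l$ (sending $l$ from row $l$ to row $k$ and $k$ from row $k$ to row $j$) when $\alpha_k=\alpha_l+1$ so that condition~(iv) remains intact. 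For case~(3) with $\alpha_i,\alpha_j+1<\alpha_k=\alpha_l$, the equality $\alpha_k=\alpha_l$ ensures column $\alpha_k$ contains both a $k$- and an $l$-cell, and column $\alpha_k-1$ does also because $\alpha_k-1\geq\alpha_j+1$; I would construct $T$ to modify only column $\alpha_k$ (sending $k$ to row $i$ and $l$ to row $j$), and $T'$ to modify column $\alpha_k$ via a cascade (sending $k$ to row $i$ and $l$ to row $k$) together with column $\alpha_k-1$ (sending its $k$-cell from row $k$ to row $j$). In every case a simple column-by-column comparison shows the two tableaux are distinct and share the same weight.

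The main obstacle is verifying that each of these constructed tableaux satisfies all four Kohnert conditions (i)--(iv) and the quasi-Yamanouchi condition~(v). Condition~(iv), which forces a smaller-valued cell to appear in the adjacent column to the right whenever a smaller value sits above a larger one within a column, is the most delicate: it can fail precisely when some intermediate index $m$ has $\alpha_m$ equal to a modified column (for example, $i<m<k$ with $m\neq j$ and $\alpha_m=\alpha_k$ in case~(1)), since then no supporting $m$-cell exists in column $\alpha_k+1$. To dispatch these obstructions I would choose the triple or quadruple of indices strategically within the given pattern---for instance by taking $k$ minimal subject to the inequalities so that the problematic intermediate values are ruled out---and fall back to the cascaded variant of the construction whenever the simpler variant is blocked. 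I expect the proof to split into a small number of subcases organized by fine comparisons such as $\alpha_k-\alpha_l$ and $\alpha_k-\alpha_j$, with each subcase resolved by one of the above constructions.
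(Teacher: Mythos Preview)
Your overall strategy—exhibit two distinct tableaux in $\QKT(\alpha)$ of equal weight—is exactly what the paper does, and you correctly flag condition~(iv) as the chief obstacle. The gap is that your one- and two-cell moves are too fragile, and neither a minimal choice of indices nor the single-column cascades you sketch repair them in general.

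Take $\alpha=(1,3,2,3)$ in case~(1): the only $123$-pattern is $(i,j,k)=(1,3,4)$, so there is no freedom in choosing indices, and your tableau $T$ (moving the $4$-cell in column~$3$ down to row~$1$) violates~(iv) because the $2$-cell at position $(2,3)$ now sits above a $4$ with no $2$ in column~$4$. The paper's remedy is not a cascade within one column but a \emph{block rearrangement}: after normalising so that $\alpha_x\le\alpha_j$ for $j<x<k$ and $\alpha_y\ge\alpha_j$ for $i<y<j$, it pushes the entire tail of row $i+1$ down into row $i$ and the entire tail of row $k$ down across the positions thus freed; the two equal-weight tableaux then differ only in whether the $j$-cell in column $\alpha_j$ and the $k$-cell in column $\alpha_j+1$ land in rows $j,\,i+1$ or in rows $i+1,\,j$. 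Moving whole row-tails in tandem is precisely what keeps~(iv) satisfied, since no long intermediate row is left stranded above a displaced larger entry. Cases~(2) and~(3) require analogous multi-row shifts together with further subcase analysis on the fine comparisons among $\alpha_i,\alpha_j,\alpha_k,\alpha_l$ (the paper's Figures~7 and~8). Carrying your sketch through would essentially mean rediscovering these block constructions, which are the substantive content of the argument rather than a routine cleanup.
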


\begin{example} \label{ex:main1} The key polynomials $\key _{(1,2,3)}$, $\key _{(1,1,3,2)}$ and $\key _{(1,1,3,3)}$ are \emph{not} multiplicity free.
\end{example}

\begin{proof}
Suppose that there are $i<j<k$ such that $\alpha_i<\alpha_j<\alpha_k$. We may assume that $k>j$ is the smallest $k$ such that $\alpha_j<\alpha_k$ and $i<j$ is the largest $i$ such that $\alpha_i<\alpha_j$, which imply that $\alpha_x\leq\alpha_j$ for all $j<x<k$ and $\alpha_y\geq \alpha_j$ for all $i<y<j$. The basic tableau of weight $\alpha$ is the first one in Figure~\ref{fig:(1)}, in which $A$ is a subtableau contained in the first $\alpha_j$ columns. The second and the third tableaux of Figure~\ref{fig:(1)} are quasi-Yamanouchi Kohnert tableaux of the same weight with content $\alpha$ and we can conclude that $\key_\alpha$ is not multiplicity free. We remark that the row filled with $*$s in the first tableau of Figure~\ref{fig:(1)} may \svwii{not exist} in which case the \svwii{bottom} two rows in the second and the third tableaux will \svw{be a single} row without $*$s. 

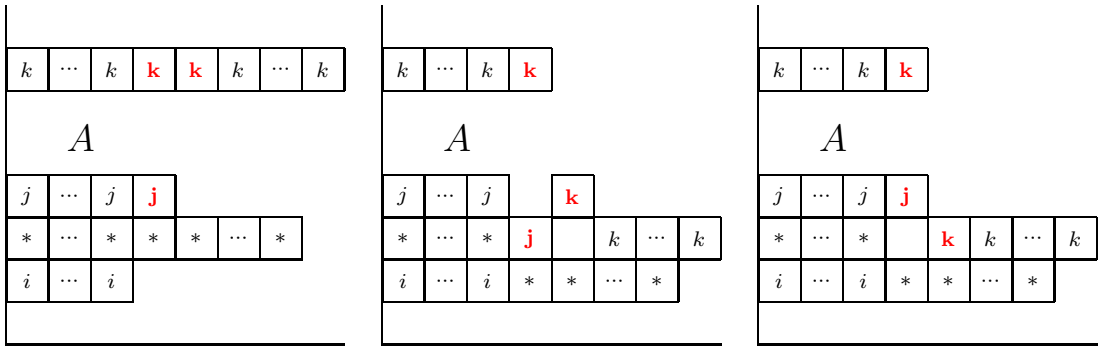
\begin{figure}[ht]
\begin{center}
    \begin{tikzpicture}[xscale=2.5,yscale=2] 
       \node at (0,0)  {$ \vline\tableau{\\k&\cdots &k&{\red\mathbf k}&{\red\mathbf k}&k&\cdots&k \\ \\  \\j&\cdots &j&{\red\mathbf j}\\ * & \cdots & * & * &*&\cdots& * \\ i&\cdots &i \\ \\\hline} $};
     \node at (2,0)  {$ \vline\tableau{\\k&\cdots &k& {\red\mathbf k} \\ \\  \\j&\cdots &j&&{\red\mathbf k}\\ *&\cdots&*&{\red\mathbf j}&&k&\cdots&k\\ i&\cdots &i&  * &*&\cdots& * \\ \\ \hline} $};
     \node at (4,0)  {$ \vline\tableau{\\k&\cdots &k& {\red\mathbf k} \\ \\  \\j&\cdots &j&{\red\mathbf j}\\*&\cdots&*&&{\red\mathbf k}&k&\cdots&k\\i&\cdots &i&  * &*&\cdots& * \\ \\ \hline} $};
     \node at (-0.5, 0.25) {\large $A$};\node at (1.5, 0.25) {\large $A$};\node at (3.5, 0.25) {\large $A$};
    % \node at (-0.72,0.43) {\tiny$\mathrm j$};\node at (-0.72,-0.14) {\tiny$\mathrm i$};
     %\node at (1.28,0.43) {\tiny$\mathrm j$};\node at (1.28,-0.14) {\tiny$\mathrm i$};
     %\node at (3.28,0.43) {\tiny$\mathrm j$};\node at (3.28,-0.14) {\tiny$\mathrm i$};
\end{tikzpicture}
  \end{center}
  \caption{\label{fig:(1)}  Basic tableau and two tableaux of same weight in $\QKT(\alpha)$ when $\alpha$ satisfies (1).}
\end{figure}

\svw{Now we will make a remark, which will be used often throughout the remainder of the proof, and will be referred to as ``the remark above.''}
We \textbf{remark} that if there is a row that is strictly longer than the rows we consider in a Kohnert tableau, then we can move cells across that row without violating any condition for being a quasi-Yamanouchi Kohnert tableau. We thus do not take account of the rows that are strictly longer than the rows we are considering.

Suppose that there are $i<j<k<l$ such that $\alpha_i, \alpha_j<\alpha_l<\alpha_k$. If there is $x>k$ such that $\alpha_x>\alpha_k$, then we have $\alpha_j<\alpha_k<\alpha_x$ and Case (1) is satisfied with $j<k<x$ and hence $\key_\alpha$ is not multiplicity free. Hence, we may assume that $\alpha_x\leq \alpha_k$ for all $x>k$, and $l>k$ is the smallest $l$ such that $\alpha_l<\alpha_k$. Then, $\alpha_x=\alpha_k$ for all $k<x<l$. On the other hand, if $\alpha_i<\alpha_y<\alpha_k$ for any $i<y<k$, then Case (1) is satisfied and hence $\key_\alpha$ is not multiplicity free. Therefore, for all $i<y<k$, $\alpha_y\leq \alpha_i$ so $\alpha_j\leq \alpha_i$ and moreover we can assume that $j=k-1$ and $i=k-2$. %Figure~\ref{fig:(2)} shows for $\alpha_i=\alph_j$ that there are at least two quasi-Yamanouch Kohnert tableaux with same weight and hence $\key_\alpha$ is not multiplicity free. 
The basic tableau of weight $\alpha$ is the first one in Figure~\ref{fig:(2)} for $\alpha_i=\alpha_j=\alpha_\ell - 1$. The second and the third tableaux of Figure~\ref{fig:(2)} are quasi-Yamanouchi Kohnert tableaux of the same weight with content $\alpha$ and we can conclude that $\key_\alpha$ is not multiplicity free. The figures for $\alpha_j<\alpha_i$, and other suitable $\alpha _\ell$, are almost identical.

\begin{figure}[ht]
\begin{center}
    \begin{tikzpicture}[xscale=2.5,yscale=2] 
       \node at (0,0) 
        {$ \vline\tableau
           {\\ l&\cdots &l&{\red\mathbf l}
            \\ k+t &\cdots &k+t &{\red\mathbf {k+t}}&{\red\mathbf{ k+t}}&k+t&\cdots&k+t 
            \\  \\ \\k+1&\cdots &k+1&{\red\mathbf {k+1}}&{\red\mathbf {k+1}}&k+1&\cdots&k+1
             \\k&\cdots &k&{\red\mathbf k}&{\red\mathbf k}&k&\cdots&k
            \\ j&\cdots&j  \\  i & \cdots & i   \\     \\  \hline} $ };
     \node at (2,0)  
      {$ \vline\tableau
        {\\  l&\cdots &l
         \\ k+t & \cdots & k+t & {\red\mathbf l}
         \\ & & &{\red\mathbf {k+t}}&{\red\mathbf {k+t}} 
         \\ & & & & & k+t&\cdots &k+t \\  k+1&\cdots&k+1
         \\k&\cdots &k&{\red\mathbf {k+1}}&{\red\mathbf {k+1}}
         \\j&\cdots&j&{\red\mathbf k}&&k+1&\cdots&k+1
         \\i&\cdots&i&&{\red\mathbf k}&k&\cdots&k
         \\ \\  \hline} $};
     \node at (4,0)  
     {$ \vline\tableau
        {\\  l&\cdots &l
         \\ k+t & \cdots & k+t & {\red\mathbf {k+t}}
         \\ & & &&{\red\mathbf {k+t}} 
         \\ & & & & & k+t&\cdots &k+t \\  k+1&\cdots&k+1&{\red\mathbf {k+1}}
         \\k&\cdots &k&{\red\mathbf {k}}&{\red\mathbf {k+1}}
         \\j&\cdots&j&&{\red\mathbf k}&k+1&\cdots&k+1
         \\i&\cdots&i&{\red\mathbf l}&&k&\cdots&k
         \\ \\  \hline} $};
     \node at (0, 0.25){ $\cdots$};  
     \node at (2, 0){ $\cdots$}; \node at (1.55, 0.25){ $\cdots$}; \node at (2.45, -0.26){ $\cdots$}; 
     \node at (4.1, 0){ $\vdots$}; \node at (3.65, 0.25){ $\cdots$}; \node at (4.5, -0.26){ $\cdots$}; 
    % \node at (-0.72,0.43) {\tiny$\mathrm j$};\node at (-0.72,-0.14) {\tiny$\mathrm i$};
     %\node at (1.28,0.43) {\tiny$\mathrm j$};\node at (1.28,-0.14) {\tiny$\mathrm i$};
     %\node at (3.28,0.43) {\tiny$\mathrm j$};\node at (3.28,-0.14) {\tiny$\mathrm i$};
\end{tikzpicture}
  \end{center}
  \caption{\label{fig:(2)} Basic tableau and two tableaux of same weight in $\QKT(\alpha)$ when $\alpha$ satisfies (2). }
\end{figure}
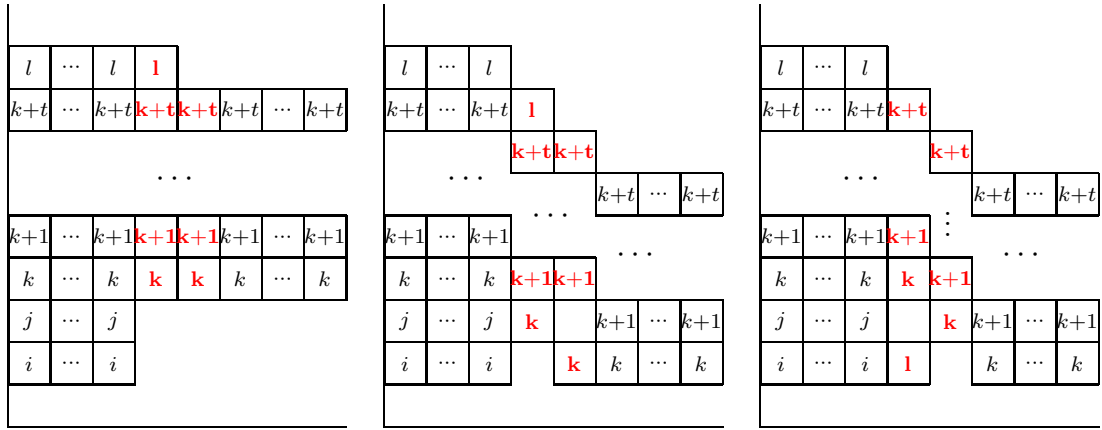

Suppose that there are $i<j<k<l$ such that $\alpha _i, \alpha _j +1 < \alpha _k = \alpha _l$. We choose the smallest $k$ and largest $j$ so that there is no $y$ such that $j<y<k$ and $\alpha _y = \alpha _k$ or $\alpha _y = \alpha _j$. We may assume there is also no $y$ such that $j<y<k$ and $\alpha_j<\alpha_y<\alpha_k$ since this is Case (1). Thus by the remark above we may assume that $\alpha_y<\alpha _j$ for all $j<y<k$. In Figure~\ref{fig:(3)} we will not draw these rows, which we will fix.

Now we choose the smallest $l$ so that there is no $x$ such that $k<x<l$ and $\alpha _x=\alpha _l$. Then by the remark above we may assume that for all $k<x<l$, $\alpha _x< \alpha _l$. Furthermore we may assume $\alpha _x \leq \alpha _j$ for all $k<x<l$ to ensure there is no $x$ such that $j<x<l$ and $\alpha_j<\alpha_x<\alpha_l$, since this is Case (1). Hence
$$\alpha _x \leq \alpha _j < \alpha _k -1 \Rightarrow \alpha _x \leq \alpha _k -2.$$ In Figure~\ref{fig:(3)} we will not draw these rows, which we will fix.
Now we choose the largest $i$ so there is no $z$ such that $i<z<j$ and  $\alpha_z<\alpha_k$.  Furthermore we can assume that $\alpha _z \geq \alpha _k$ for all $i<z<j$ as otherwise we can choose $z<j<k<l$ such that $\alpha _z, \alpha _j +1 < \alpha _k = \alpha _l$. Thus by the remark above we can assume that $\alpha _z = \alpha _k$. In Figure~\ref{fig:(3)}, we will denote the rows $i<z<j$ where $\alpha _z = \alpha _k$ collectively by $\ast$ and $\star$. \svw{We note that our construction also works if $j=i+1$.}

We now consider the various cases for $\alpha _i$ and $\alpha _j$. We do not need to consider the case $\alpha _i < \alpha _j$ since this is Case (1). For the case $ \alpha _i = \alpha_j +1$ we have that the basic tableau of weight $\alpha$ is the first one in Figure~\ref{fig:(3)}. The second and third tableaux of Figure~\ref{fig:(3)} are quasi-Yamanouchi Kohnert tableaux of the same weight with content $\alpha$ and we can conclude in this case that $\kappa _\alpha$ is not multiplicity free. The figures for the cases $\alpha _i = \alpha _j$ and $ \alpha _i > \alpha _j +1 $ are almost identical. 
%Suppose that there are $i<j<k<l$ such that $\alpha_i, \alpha_j+1<\alpha_k=\alpha_l$. Choose the smallest $k$ satisfying the conditions so there is no $x$ such that $j<x<k$ and $\alpha_x=\alpha_k$. There is also no $x$ such that $j<x<k$ and $\alpha_j<\alpha_x<\alpha_k$ since this is Case (1) and hence $\key_\alpha$ is not multiplicity free. Similarly, we can assume that $\alpha_i=\alpha_j+1$ or $\alpha_j$.
%Choosing the smallest $l$ satisfying the conditions we may assume $\alpha_y<\alpha_k$ for all $k<y<l$. In Figure~\ref{fig:(3)}, we will not draw these rows, which we will fix.
%We may also assume $\alpha_y\leq \alpha_j$ or $\alpha_y=\alpha_k$ for all $i<y<j$ otherwise there exists $j<y<k$ and $\alpha_j<\alpha_y<\alpha_k$, which is Case (1) and hence 
%$\key_\alpha$ is not multiplicity free. In Figure~\ref{fig:(3)}, we denote the rows $i<y<j$ where $\alpha_y=\alpha_k$ collectively by $*$ and $\star$ and do not draw the rows $\alpha_y\leq \alpha_j$ which we will fix.
%The basic tableau of weight $\alpha$ is the first one in Figure~\ref{fig:(3)} for $\alpha_j+1=\alpha_i$. The second and the third tableaux of Figure~\ref{fig:(2)} are quasi-Yamanouchi Kohnert tableaux of a same weight with content $\alpha$ and we can conclude that $\key_\alpha$ is not multiplicity free. The figures for $\alpha_j=\alpha_i$ are almost identical.
% By the same argument, we can assume that $\alpha_y$ for $i<y<j$ and for $k<y<l$. We can check that there are at least two quasi-Yamanouchi Kohnert tableaux of a same weight. See Figure~\ref{fig:(3)}.
\begin{figure}[ht]
\begin{center}
    \begin{tikzpicture}[xscale=2.5,yscale=2] 
       \node at (0,0)  
       {$ \vline\tableau{
        \\l&\cdots &l&\cdots&\cdots&{\red\mathbf l}&{\red\mathbf l}
        \\  \\k&\cdots &k&\cdots&\cdots&{\red\mathbf k}&{\red\mathbf k}
        \\ 
        \\ j & \cdots & j
        \\  * & \cdots & *&\cdots&\cdots & * &*
        \\ \star & \cdots & \star&\cdots &\cdots& \star &\star
        \\ i & \cdots & i&i  \\ \\ \hline} $};
     \node at (2,0)  
     {$ \vline\tableau{
     \\l&\cdots &l&\cdots&\cdots&{\red\mathbf l}
     \\  \\k&\cdots &k&\cdots&\cdots&{\red\mathbf k}
     \\ \\j & \cdots & j& &&&{\red\mathbf l} 
     \\   * & \cdots & *&\cdots &\cdots& * &{\red\mathbf k}
     \\   \star & \cdots & \star&\cdots &\cdots& \star & *
     \\i & \cdots & i &i & & & \star\\ \\ \hline} $};
     \node at (4,0)   
     {$ \vline\tableau{
     \\l&\cdots &l&\cdots&\cdots&{\red\mathbf l}\\ 
     \\k&\cdots &k&\cdots&\cdots&&{\red\mathbf l}\\
     \\j&\cdots&j&& &{\red\mathbf k} 
     \\*&\dots&*&\cdots&\cdots&*&{\red\mathbf k} 
     \\   \star & \cdots & \star&\cdots &\cdots& \star & *
     \\ i&\cdots&i&i&&&\star\\ \\ \hline} $};
    % \node at (-0.72,0.43) {\tiny$\mathrm j$};\node at (-0.72,-0.14) {\tiny$\mathrm i$};
     %\node at (1.28,0.43) {\tiny$\mathrm j$};\node at (1.28,-0.14) {\tiny$\mathrm i$};
     %\node at (3.28,0.43) {\tiny$\mathrm j$};\node at (3.28,-0.14) {\tiny$\mathrm i$};
\end{tikzpicture}
  \end{center}
  \caption{\label{fig:(3)} Basic tableau and two tableaux of same weight in $\QKT(\alpha)$ when $\alpha$ satisfies (3). }
\end{figure}
\end{proof}

We are now ready to give our classification of when $\key_\alpha$, for $\alpha$ a strong composition, is \svw{multiplicity free, and see that the conditions in Theorem~\ref{thm:main1} classify when $\key_\alpha$ has multiplicities.}

%\bigskip
\begin{theorem}\label{thm:main2} $\key_\alpha$ is multiplicity free if and only if $\alpha$ satisfies  all three of the following conditions:
\begin{enumerate}
\item[(a)] There is no $i<j<k$ such that $\alpha_i<\alpha_j<\alpha_k$.
\item[(b)] There is no $i<j<k<l$ such that $\alpha_i, \alpha_j<\alpha_l<\alpha_k$.
\item[(c)] There is no $i<j<k<l$ such that $\alpha_i, \alpha_j+1<\alpha_k=\alpha_l$.
\end{enumerate}
\end{theorem}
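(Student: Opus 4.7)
The forward direction of the equivalence is the contrapositive of Theorem~\ref{thm:main1}, which has already been established. My plan therefore is to focus on the backward direction: assuming $\alpha$ satisfies all three conditions~(a), (b), and~(c), I must show that $\key_\alpha$ is multiplicity free.

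By Proposition~\ref{prop:key2slide}, $\key_\alpha = \sum_{T \in \QKT(\alpha)} \fund_{\wt(T)}$, and since distinct fundamental slide polynomials are linearly independent, it suffices to show that the weight map $\wt \colon \QKT(\alpha) \to \mathbb{Z}_{\geq 0}^\ell$ is injective. I would approach this by first using conditions~(a)--(c) to extract a structural description of $\alpha$: condition~(a) forces every ascent in $\alpha$ to be tightly controlled by later parts (once a peak is exceeded no subsequent entry may climb higher), while (b) and (c) eliminate the near-peak configurations that would otherwise allow two competing cell migrations. Together these reduce $\alpha$ to a manageable family of shape-types, for each of which I would enumerate $\QKT(\alpha)$ directly by tracking the allowed downward cell drops from the basic tableau.

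The argument would then proceed by induction on the length of $\alpha$ (or on the total number of cells), exploiting the recursive algorithm promised in Subsection~\ref{subsec:alg} to peel off one part at a time and track how the set of quasi-Yamanouchi tableaux grows. Base cases where $\alpha$ is a partition are covered by Corollary~\ref{cor:one_term}, and two-term cases by Proposition~\ref{prop:2termsstrong}; the inductive step combines the injectivity of $\wt$ on the smaller $\QKT$ with an explicit description of how appending a part can spawn new tableaux, and verifies that no two of these new tableaux land at the same row-length profile.

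The main obstacle I anticipate is handling the borderline configurations that satisfy~(a)--(c) yet admit several quasi-Yamanouchi tableaux. For these I would use a contrapositive-style obstruction argument modelled on the figures in the proof of Theorem~\ref{thm:main1}: if distinct $T, T' \in \QKT(\alpha)$ had $\wt(T) = \wt(T')$, then examining the rows where their cell migrations diverge should extract a triple or quadruple of parts of $\alpha$ instantiating one of the forbidden patterns~(1)--(3). Making this extraction precise---pinpointing which indices of $\alpha$ carry the requisite strict inequalities, and ensuring that the divergence between $T$ and $T'$ always localises to such a pattern---will require the bulk of the technical work, and is where I expect the case analysis to be most delicate.
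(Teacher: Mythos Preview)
Your high-level scaffolding matches the paper: the forward direction is indeed Theorem~\ref{thm:main1}, and the backward direction is handled by induction on $\ell(\alpha)$ with the base cases you name. But the proposal is missing the actual inductive step, and the substitutes you suggest do not work.

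The paper's induction does \emph{not} ``append a part'' or invoke the algorithm of Subsection~\ref{subsec:alg}. Instead, given $T_1,T_2\in\QKT(\alpha)$ with $\wt(T_1)=\wt(T_2)$, it proves the concrete claim that \emph{the first rows of $T_1$ and $T_2$ are identical as filled rows}. Deleting that common first row and decrementing every entry produces $T_1',T_2'\in\QKT(\beta)$ for a strong composition $\beta$ of length $\ell-1$ that still avoids (a)--(c), so induction finishes. The entire technical content lies in that first-row claim, which the paper proves by a careful case split on whether the first ascent occurs at $i=1$ or $i>1$, using (a)--(c) to pin down exactly which labels can migrate to row~1 and from where (the block decomposition $B_1,\dots,B_x$ in Figure~\ref{fig:i=1}). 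Your plan never isolates this lemma, and without it the induction has no engine: knowing that $\QKT(\hat\alpha)$ is multiplicity free via the algorithm of Theorem~\ref{thm:alg} does not obviously transfer to $\QKT(\alpha)$, since a single $\hat T$ can spawn several $T_k$ and two distinct $\hat T$'s can spawn tableaux of equal weight.

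Your alternative ``obstruction'' idea---from $T_1\neq T_2$ with equal weight, read off a forbidden pattern in $\alpha$---is logically equivalent to what the paper does, but you have not said \emph{how} to locate the indices. In practice this is exactly the first-row argument again: one shows that any discrepancy between $T_1$ and $T_2$ in row~1 forces some $\alpha_j$ with $\alpha_1<\alpha_j<\alpha_{i+1}$ (pattern~(a)) or the analogous (b)/(c) configuration. So the missing piece is the same: a proof that equal-weight quasi-Yamanouchi Kohnert tableaux agree in row~1.
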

\begin{proof}
We use an induction on the number $\ell(\alpha)$ of parts of $\alpha$.

If $\ell(\alpha)=1$ or $\ell(\alpha)=2$ and  $\mathrm{inv}(\alpha)=0$, then $\key_\alpha=\fund_\alpha$ by Corollary~\ref{cor:one_term}. If $\ell(\alpha)=2$ and  $\mathrm{inv}(\alpha)=1$ then by Lemma~\ref{lem:expansion} $\key_\alpha$ is multiplicity free. 

We assume that the statement is true for all strong compositions $\beta$ with $\ell(\beta) < \ell$ for an $\ell \geq 2$. Let $\alpha=(\alpha_1,   \dots, \alpha_\ell)$ be a strong composition with $\ell$ parts, that satisfies Conditions (a), (b) and (c). 
Since $\mathrm{inv}(\alpha)=0$ implies $\key_\alpha=\fund_\alpha$ by Corollary~\ref{cor:one_term}, we assume that $\mathrm{inv}(\alpha)>0$ and let $i$ be the smallest index such that $\alpha_i<\alpha_{i+1}$.
We let $T_1$ and $T_2$ be two quasi-Yamanouchi Kohnert tableaux in $\QKT(\alpha)$ of the same weight. Then we claim that the first row of $T_1$ and $T_2$ are identical. We deal with  two cases  $i=1$ and $i>1$ separately to show the claim. \\
\begin{itemize}
\item[$i=1$: ] 
\svw{We will break this part of the proof into three parts. In the first part we will deduce the generic structure of our basic tableau in $\QKT (\alpha)$ when $\alpha_1<\alpha_2$, resulting in Figure~\ref{fig:i=1}. In the second part we will make three crucial observations on producing tableaux in $\QKT (\alpha)$. In the third part we will apply these observations to prove our claim using a proof by contradiction.}\\

\textbf{Deducing the structure:} Note first that since $\alpha_1<\alpha_2$ and $\alpha$ satisfies Condition (a), $\alpha_j\leq \alpha_2$ for $j>2$. Moreover, if  there are $2\leq j<k$ such that $\alpha_j<\alpha_{k}$, then $\alpha_j$ must be at most $\alpha_1$, that is, $\alpha_j\leq \alpha_1$ due to Condition (a). 

{  
Let us suppose that  $\alpha_j \leq \alpha_1<\alpha_{k}, \alpha_{k+1}$ for $2\leq j<k$. Then because of Condition (a),  $\alpha_l \leq \alpha_k$ for all $l>k$ and $\alpha_{k+1}$ is at most $\alpha_k$. If $\alpha_{k+1} < \alpha_k$, then $1<j<k<k+1$ gives a counterexample of Condition (b) and this implies that $\alpha_{k+1}=\alpha_{k}$. If  $\alpha_1+1<\alpha_{k+1}=\alpha_{k}$, then $1<j<k<k+1$ is a counterexample of  Condition (c) and we can conclude that $\alpha_{k}=\alpha_{k+1}=\alpha_1+1$. We now suppose that $\alpha_j<\alpha_1$, then $1<j<k<k+1$ is again a counterexample of  Condition (c), hence $\alpha_j=\alpha_1$.  
}

\svw{Consequently the basic tableau looks like Figure~\ref{fig:i=1}, where} 
we let $\{n_1<n_2<\cdots<n_x\}$ be the set $\{\alpha_j\,|\, j>2, \alpha_1<\alpha_j\}$ and let $B_m=\{ j \,|\, \alpha_j=n_m\}$, $m=1, 2, \dots, x$, be the set of  rows in the basic \svwii{quasi-Yamanouchi} Kohnert tableau of content $\alpha$ having $n_m$ cells. \\

\svw{Note that} if $x=1$, then $\alpha_2=\alpha_3=\cdots=\alpha_\ell $ and it is easy to see that by the definition of quasi-Yamanouchi Kohnert tableaux that the claim that the first row of $T_1$ and $T_2$ are identical is true. \svw{Therefore, let us assume that $x>1$.}\\

If $x>1$, then $B_2\cup B_3\cup \cdots \cup B_x=\{2, 3, \dots, \mathrm{max}(B_2)\}$, that is, there is no row shorter than  $\alpha_1+1$ among rows $2, 3, \dots, \mathrm{max}(B_2)$  by the definition of $B_m$. % and if $\alpha_j\leq \alpha_1$, $j\not=1$, then $j>\mathrm{max}(B_2)$. 
The yellow {box} in Figure ~\ref{fig:i=1} shows {the rows with $\alpha_1$ cells.}
{There can be more than one pair of the yellow box consisting of rows of length $\alpha_1$ and $B_1$ consisting of rows of length  $\alpha_1+1$. \svw{Above that there can be} a block that satisfies Conditions (a), (b), (c), of rows of length at most $\alpha_1$  at the top of the diagram of $\alpha$. We draw only one pair of such blocks (the yellow box and $B_1$) in Figure ~\ref{fig:i=1}, since the rows above  $B_1$ do not have any effect on the first row of $T_1$ and $T_2$ \svw{since the entries in these rows cannot be moved to the first row}. } \\

\svw{\textbf{Three observations:}  We now make three crucial observations on creating tableaux in $\QKT(\alpha)$.}

\begin{itemize}
\item[ i)]  The first row of a tableau in $\QKT(\alpha)$ is obtained by moving some of  the rightmost numbers in the lowest row of each block $B_m$ (of the basic tableau) down to the first row so that \svw{the contiguous cells that are moved down to the first row from} each block is apart by at least one empty cell. This follows by the definition of quasi-Yamanouchi Kohnert tableaux. For example, the gray cells in Figure ~\ref{fig:i=1} can be moved down to the first row.

\svw{As a consequence we have the next two observations.}

\item[ ii)] If an entry of the lowest row of $B_1$ is moved down to the first row of a tableau in $\QKT(\alpha)$, then  \svw{the only} possible  \svwii{moves} from rows $r >\mathrm{max}(B_2)$ in the basic \svwii{quasi-Yamanouchi} Kohnert tableau is to move entries either to the first row or within rows between $\mathrm{max}(B_2)+1$ and $\ell$. 

\item[ iii)] If an entry of the lowest row of $B_m$, $m>1$, is moved down to the first row of a tableau in $\QKT(\alpha)$, then   it is straightforward to verify that \svw{the only} possible \svwii{moves} from rows $r\geq \mathrm{min}(B_m)$ in the basic \svwii{quasi-Yamanouchi} Kohnert tableau is to move entries either down to the first row or no lower than the first row of $B_m$.    
\end{itemize}

\

\svw{\textbf{Concluding identical first rows:}} \svw{Towards a contradiction, suppose} that the first row of $T_1$ and $T_2$ are different and let $m$ be the smallest such that numbers of entries in the first row of $T_1$ and $T_2$ from $B_m$ are different. \svw{By our second and third observations we immediately get the following.} 
If $m=1$, then the number of entries in the last rows (above $B_2$) of $T_1$ and $T_2$ are  different and $T_1$ and $T_2$ cannot be of a same weight. 
If $m>1$, then the number of entries in $B_1\cup B_2\cup \cdots \cup B_{m}$ of $T_1$ and $T_2$ are different and $T_1$ and $T_2$ cannot be of a same weight.  \svwii{This contradicts that $T_1$ and $T_2$ are the same weight. Hence the first row of $T_1$ and $T_2$ are identical.}

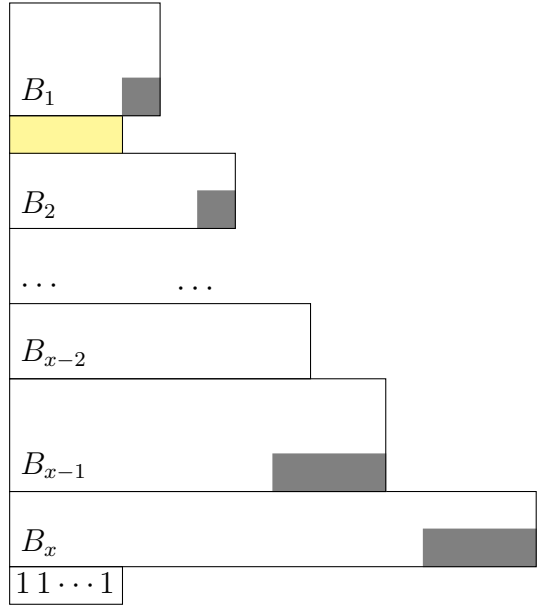
\begin{figure}[ht]
\begin{center}
    \begin{tikzpicture}% [xscale=1,yscale=2] 
    \draw (0,0) --(1.5,0)--(1.5,0.5)--(0,0.5)--cycle  node[above right] {$\!1\, 1 \cdots 1\,$};
    \filldraw[gray] (5.5,0.5) rectangle  (7,1);\draw (0,0.5)--(7,0.5)--(7,1.5)--(0,1.5)--cycle node[above right] {$B_x$}; 
    \filldraw[gray] (3.5,1.5) rectangle  (5,2);\draw (0,1.5)--(5,1.5)--(5,3)--(0,3)--cycle node[above right] {$B_{x-1}$}; 
    \draw (0,3)--(4,3)--(4,4)--(0,4)--cycle node[above right] {$B_{x-2}$}; 
    \draw (0,5)--(0,4) node[above right] {$\cdots$} ;
    \filldraw[gray] (2.5,5) rectangle  (3,5.5);\draw (0,5)--(3,5)--(3,6)--(0,6)--cycle  node[above right]{$B_2$};
    \filldraw[fill=yellow!50] (0,6)--(1.5,6)--(1.5,6.5)--(0,6.5)--cycle ;
   \filldraw[gray] (1.5,6.5) rectangle  (2,7); \draw (0,6.5)--(2,6.5)--(2,8)--(0,8)--cycle  node[above right]{$B_1$};
    \node at (2.5,4.2) {$\cdots$};
    \end{tikzpicture}
  \end{center}
  \caption{\label{fig:i=1} The basic tableau in $\QKT(\alpha)$ when $\alpha_1<\alpha_2$. }
\end{figure}

\item[$i>1$: ] First note that $\alpha_j\leq \alpha_{i+1}$ must hold for all $j>i+1$ since $\alpha$ satisfies Condition (a), and $\alpha_1\geq \alpha_2\geq \cdots \geq \alpha_i<\alpha_{i+1}$ by our assumption on $i$. 
There are two cases to consider:
 
If $\alpha_1\geq \alpha_{i+1}$, then the first row of any tableau in $\QKT(\alpha)$ must only contain $\alpha_1$ $1$s.

If not, then $\alpha_1<\alpha_{i+1}$ and only $(i+1)$s can be moved down to the first row of a quasi-Yamanouchi Kohnert tableau. This is because if $j>i+1$ can appear in the first row of a quasi-Yamanouchi Kohnert tableau, { then $\alpha_2\leq \alpha_1<\alpha_j<\alpha_{i+1}$, by the definition of quasi-Yamanouchi Kohnert \svwii{tableaux}, must hold} but this violates Condition (b). Therefore, two tableaux of the same weight must have the same first row consisting of $1$s and $(i+1)$s.
\end{itemize}

Now, we know that $T_1$ and $T_2$ have the same first row and we let $T_1'$ and $T_2'$ be the tableaux obtained by deleting the first row from $T_1$, $T_2$, respectively and subtracting $1$ from every remaining entry. Then $T_1'$ and $T_2'$ are quasi-Yamanouchi Kohnert tableaux of content $\beta$, where $\beta$ is obtained by subtracting the content $>1$ of the first row of $T_1$ (or $T_2$) from the relevant respective parts of $\alpha$. Then since there is still a cell in every row of the first column of $T_1'$ and $T_2'$ it follows that $\beta$ is a strong composition with $\ell(\alpha)-1$ parts. It is straightforward to check that $\beta$ does not contain any of the patterns (a), (b), (c). By the induction hypothesis, $T_1'$ and $T_2'$ must be the same tableau and hence we can conclude that $T_1$ and $T_2$ must be the same tableau too.
\end{proof}

When we restrict our attention to strong compositions $\alpha$, we are also able to give an algorithm to produce all the tableaux required to expand a key polynomial as a sum of fundamental slide polynomials, that is, produce all the elements of $\QKT(\alpha)$.
%%%%%%%%%%%%%%%%
\subsection{Recursive algorithm to produce all elements of $\QKT(\alpha)$}\label{subsec:alg} %for strong composition $\alpha$}

For a strong composition \svwii{$\alpha=(\alpha_1, \dots, \alpha_\ell)$,} let $T_\alpha$ be the basic \svwii{quasi-Yamanouchi} Kohnert tableau with content $\alpha$.
\begin{itemize} 
 \item  If $\mathrm{inv}(\alpha)=0$, then $\QKT(\alpha)=\{T_\alpha \}$.
   
 \item If $\mathrm{inv}(\alpha)=s>0$, then let $i$ be the smallest such that $\alpha_i<\alpha_{i+1}$ and let $\hat{\alpha}$  be the strong composition with $\mathrm{inv}(\hat{\alpha})=s-1$, obtained by interchanging the $i$th and the $(i+1)$th parts of $\alpha$; $\hat{\alpha}=(\alpha_1, \dots, \alpha_{i-1}, \alpha_{i+1}, \alpha_i, \dots, \alpha_\ell)$.
   
    For each $\hat{T}\in \QKT(\hat{\alpha})$, do
      \begin{itemize}
      \item  from columns $c=\alpha_i+1, \alpha_i+2, \dots $, change all $i$ into $i+1$ and change all $i+1$ into $i$, call the resulting tableau ${T}_0$; let $S(\hat{T}):=\{{T}_0 \}$
      
      \item  for $k=\alpha_i+1, \dots, \alpha_{i+1}$, do
        
         if the cell in row $(i+1) $ and column $k$ of  ${T}_0$ is empty {and  the cell in row $i$ and column $k$ contains $i+1$,} then swap the cells in row $i$ and row $i+1$ from column $\alpha_i+1$ to $k$ and note the resulting tableau ${T}_k$; let 
$S(\hat{T}):=S(\hat{T})\cup \{{T}_k \}$.
\end{itemize}
\end{itemize}

\begin{theorem}\label{thm:alg} For a strong composition $\alpha$, the above algorithm produces all quasi-Yamanouchi Kohnert tableaux of content $\alpha$, that is
 $$\QKT(\alpha)=\bigcup_{\hat{T}\in \QKT(\hat{\alpha})} S(\hat{T})\,.$$
\end{theorem}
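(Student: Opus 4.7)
The plan is to proceed by strong induction on $s = \mathrm{inv}(\alpha)$. For the base case $s = 0$, $\alpha$ is a partition, so Corollary~\ref{cor:one_term} gives $\key_\alpha = \fund_\alpha$; combined with Proposition~\ref{prop:key2slide} this forces $|\QKT(\alpha)| = 1$, and since $T_\alpha$ always lies in $\QKT(\alpha)$, we conclude $\QKT(\alpha) = \{T_\alpha\}$ as desired.

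For the inductive step with $s > 0$, observe that $\hat\alpha$ has exactly $s - 1$ inversions: swapping the minimally-indexed ascending pair $(\alpha_i, \alpha_{i+1})$ removes one inversion and creates none, since $\alpha_1 \geq \cdots \geq \alpha_i$ by minimality of $i$. I would then prove the stated equality by establishing two containments. The $\supseteq$ direction (soundness) requires verifying that every $T_k \in S(\hat{T})$ is a quasi-Yamanouchi Kohnert tableau of content $\alpha$. The label-swap $i \leftrightarrow i+1$ in columns $\geq \alpha_i + 1$ converts $\hat\alpha$-content into $\alpha$-content, and the subsequent within-column swaps between rows $i$ and $i+1$ preserve content. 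Conditions (i)--(iv) of Definition~\ref{def:(Q)KT} reduce to a local check at rows $i, i+1$ using the minimality of $i$ together with the inductive assumption $\hat{T} \in \QKT(\hat\alpha)$; condition (v) holds automatically because the first column of any Kohnert tableau of a strong composition is fully occupied.

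For the $\subseteq$ direction (completeness), given $T \in \QKT(\alpha)$, I would construct a preimage $\hat{T}$ by reversing the algorithm's operations: first push every $(i+1)$ sitting in row $i$ with column index $\geq \alpha_i + 1$ back up into the empty cell of row $i+1$ directly above it; then swap the labels $i \leftrightarrow i+1$ in all columns $\geq \alpha_i + 1$. By construction $T \in S(\hat{T})$, provided $\hat{T} \in \QKT(\hat\alpha)$.

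The hardest part will be verifying that $\hat{T} \in \QKT(\hat\alpha)$. Content and the weakly-descending condition~(iii) are immediate, but condition~(iv) is delicate, as it couples entries across adjacent columns and can be threatened by the label swap. I expect the argument to hinge on the fact that $\hat\alpha$ inherits the weakly decreasing prefix of $\alpha$ through position $i-1$, together with a careful enumeration of which cells in rows $i, i+1$ of $T$ can be occupied given its quasi-Yamanouchi structure. Once $\hat{T} \in \QKT(\hat\alpha)$ is established, the inductive hypothesis completes the proof.
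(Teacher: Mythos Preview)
Your inductive framework and the soundness direction ($\supseteq$) are reasonable, but the inverse map you propose for completeness ($\subseteq$) is wrong, not merely hard to verify. Take $\alpha=(1,2)$, so $i=1$ and $\hat\alpha=(2,1)$. The unique $\hat T\in\QKT(\hat\alpha)$ is the basic tableau, and the element $T_0\in S(\hat T)$ has row~$2$ equal to $[2]$ and row~$1$ equal to $[1,2]$. Your rule pushes the $2$ sitting in row~$1$, column~$2$ up to the empty cell above, then swaps labels in column~$2$, producing a tableau with a $1$ in row~$2$ --- a violation of condition~(ii) of Definition~\ref{def:(Q)KT}. The underlying problem is that your push-up treats every $(i+1)$ in row~$i$ past column~$\alpha_i$ identically, whereas the algorithm distinguishes $T_0$ (no row swap at all) from $T_k$ (row swap on the \emph{contiguous} block of columns $\alpha_i+1,\dots,k$). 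To invert, one must first recover which $k$ was used, and that information is not carried by the $(i+1)$'s in row~$i$.

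The paper's inverse reads $k$ from row~$i+1$ instead: locate the rightmost $(i+1)$ in row~$i+1$ of $T$, say in column~$k$; swap rows~$i$ and~$i+1$ on columns $\alpha_i+1$ through $k$ (an empty range when $k\le\alpha_i$, correctly handling the $T_0$ case); then swap the labels $i\leftrightarrow i+1$ in columns $\ge\alpha_i+1$. This works because in $\hat T$ the columns $>\alpha_i$ contain no label $i+1$ (since $\hat\alpha_{i+1}=\alpha_i$), so in $T_0$ no cell of row~$i+1$ beyond column~$\alpha_i$ carries $i+1$; hence in $T_k$ the rightmost $(i+1)$ in row~$i+1$ really sits in column~$k$. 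With this corrected inverse your outline goes through, and the verification that the resulting $\hat T$ lies in $\QKT(\hat\alpha)$ is routine rather than delicate.
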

\begin{proof}
It is easy to see that given any $T\in\QKT(\alpha)$, we can find a corresponding $\hat{T}\in\QKT(\hat{\alpha})$ by noting the rightmost $i+1$ in row $i+1$ in column $k$ and swapping all cells in row $i$ and row $i+1$ from column $\alpha_i+1$ to $k$. {Then for all columns $\alpha _i +1, \alpha _i +2, \ldots$ change all $i$ into $i+1$ and change all $i+1$ into $i$.}
\end{proof}

\begin{example}\label{ex:alg} Let $\alpha=(2,1,4,3)$. Then $i=2$ and $\hat{\alpha}=(2,4,1,3)$.

If we let 
$\hat{T}=\tableau
           {4&4
            \\ 3
            \\ 2&\red 2&4
             \\1&1&\red 2&\red 2 \\  \hline} \in \QKT(\hat{\alpha})$,\quad  then 
\,\,${T}_0=\tableau
           {4&4
            \\ 3& 
            \\ 2&\red 3&4
             \\1&1&\red 3&\red 3 \\  \hline} \in \QKT(\hat{\alpha})$.
    
 \medskip         
             
 {  The \svw{empty cell} in the third row and the second column of ${T}_0$, since the cell below it is filled with $i+1=3$,} will make the following  quasi-Yamanouchi Kohnert tableau of content $\alpha$.

$${T}_2=\tableau
           {4&4
            \\ 3& \blue 3 
            \\ 2&&4
             \\1&1&3&3 } \quad $$
%  {T}_3=\tableau
%           {4&4
 %           \\ 3&\blue 3 & \blue 4
 %           \\ 2&&
 %            \\1&1&3&3 \\  \hline} $$

\end{example}

\bigskip
%%%%%%%%%%%%%%%%%%%%%%%%%%%%%%%%%%%%%%%%%%%%%%%%%%%%%%%%%%%%%%
\section{Classifying when $\key_\a$ is multiplicity free for $\a$ a weak composition}\label{sec:MFforWeak}

At present classifying in general when a key polynomial is a multiplicity free expansion of fundamental slide polynomials seems substantially more complex than the strong composition case. However, we are able to make progress in some special cases. In particular we focus on the cases related to Theorem~\ref{thm:BvW13}, namely, hooks and two nonzero parts.

\begin{lemma}\label{lem:notMF} Let $\alpha$ be a strong composition with $\key_\alpha$ is not multiplicity free and $\a$ be a weak composition with $\mathrm{flat}(\a)=\alpha$. Then $\key_\a$ is not multiplicity free.
\end{lemma}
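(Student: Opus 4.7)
The plan is to construct an injection $\Phi\colon \QKT(\alpha)\to\QKT(\a)$ that respects weight equality, and then apply Proposition~\ref{prop:key2slide}. Since $\key_\alpha$ is not multiplicity free, the proposition supplies distinct $T_1,T_2\in\QKT(\alpha)$ with $\wt(T_1)=\wt(T_2)$, and I will show $\Phi(T_1)$ and $\Phi(T_2)$ witness that $\key_\a$ is not multiplicity free.

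To define $\Phi$, let $i_1<i_2<\cdots<i_\ell$ be the positions of the nonzero parts of $\a$, so that $a_{i_k}=\alpha_k$ for each $k$. For $T\in\QKT(\alpha)$, I define $\Phi(T)$ by replacing each cell of $T$ in row $r$ and column $c$ with entry $k$ by a cell in row $i_r$ and column $c$ with entry $i_k$. Column positions are unchanged and the content becomes $\a$, so in particular the basic Kohnert tableau of content $\alpha$ is sent to the basic Kohnert tableau of content $\a$.

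The main work is verifying $\Phi(T)\in\QKT(\a)$. Conditions (i) and (ii) of Definition~\ref{def:(Q)KT} are immediate from $a_{i_k}=\alpha_k$ and the monotonicity implication $k\geq r \Rightarrow i_k\geq i_r$. Conditions (iii) and (iv) follow because that same monotonicity preserves row orderings within every column and carries any witness cell required by (iv) for $T$ to the corresponding witness for $\Phi(T)$. The hard part will be the quasi-Yamanouchi condition (v): it is tempting to worry that rows $j$ with $i_r<j<i_{r+1}$ are empty in $\Phi(T)$, so that alternative (v)(b) cannot save row $i_r$. The rescue comes from the remark following Lemma~\ref{lem:expansion}: because $\alpha$ is a strong composition, column $1$ of every $T\in\QKT(\alpha)$ contains entry $r$ in row $r$ for each $1\leq r\leq\ell$, so under $\Phi$ that cell becomes entry $i_r$ in row $i_r$ of $\Phi(T)$, giving alternative (v)(a) automatically for every nonempty row of $\Phi(T)$.

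To finish, injectivity of $\Phi$ is immediate because $(r,c,k)\mapsto(i_r,c,i_k)$ is injective on cell positions. The weight satisfies $\wt(\Phi(T))_{i_r}=\wt(T)_r$ and $\wt(\Phi(T))_j=0$ for $j\notin\{i_1,\ldots,i_\ell\}$, so $\wt(\Phi(T))$ is determined by $\wt(T)$ and $\a$. Hence $\wt(T_1)=\wt(T_2)$ forces $\wt(\Phi(T_1))=\wt(\Phi(T_2))$, producing two distinct tableaux in $\QKT(\a)$ of the same weight and therefore a repeated fundamental slide polynomial in $\key_\a$. The subtle step is the verification of (v), and the strong-composition hypothesis on $\alpha$ is exactly what makes it go through.
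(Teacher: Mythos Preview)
Your proof is correct and takes essentially the same approach as the paper: your map $\Phi$ is precisely the paper's ``adding empty rows and increasing the numbers in the cells by the number of empty rows inserted below,'' just written out explicitly. Your detailed verification of conditions (i)--(v), in particular the use of the column-1 structure (the remark after Lemma~\ref{lem:expansion}) to handle the quasi-Yamanouchi condition, fills in what the paper leaves implicit.
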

\begin{proof} If there are two different tableaux of the same weight in $\QKT(\alpha)$, then by adding empty rows and increasing the numbers in the cells by the number of empty rows inserted below we will produce two different tableaux of the same weight in $\QKT(\a)$.
\end{proof}

\begin{lemma}\label{lem:MF} If $s_{\mathrm{sort}(\a)}$ is multiplicity free, then $\key_\a$ is multiplicity free.
\end{lemma}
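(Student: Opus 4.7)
The plan is to prove the contrapositive: assuming $\key_\a$ is not multiplicity free, we will produce a repeated summand in the fundamental quasisymmetric expansion of $s_{\mathrm{sort}(\a)}$. By Proposition~\ref{prop:key2slide}, the assumption yields distinct $T_1,T_2\in\QKT(\a)$ with a common weight $\b=\wt(T_1)=\wt(T_2)$.

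For each $m\geq 0$, let $T_i^{(m)}$ denote the diagram obtained from $T_i$ by translating every cell up by $m$ rows and increasing every entry by $m$. A direct check of conditions (i)--(v) of Definition~\ref{def:(Q)KT} shows that $T_i^{(m)}\in\QKT(0^m\times \a)$ with $\wt(T_i^{(m)})=0^m\times \b$, and since the shift map $T\mapsto T^{(m)}$ is clearly injective, $T_1^{(m)}\neq T_2^{(m)}$. Consequently, Proposition~\ref{prop:key2slide} applied to $\key_{0^m\times \a}$ gives that $\fund_{0^m\times \b}$ appears with coefficient at least $2$ at every finite stage $m$.

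The final step is to push this finite-stage multiplicity across the stable limits of Theorem~\ref{thm:limit}, namely $\lim_m\key_{0^m\times \a}=s_{\mathrm{sort}(\a)}$ and $\lim_m\fund_{0^m\times \b}=F_{\mathrm{flat}(\b)}$. The main obstacle I anticipate is ensuring that the repeated $\fund_{0^m\times \b}$ summand does not get absorbed or cancelled upon passing to the limit. My plan is to handle this by upgrading the injection $T\mapsto T^{(m)}$ to a bijection $\QKT(\a)\to\QKT(0^m\times \a)$ for all $m\geq 0$: surjectivity would follow by assuming $T'\in\QKT(0^m\times \a)$ had a cell in some row $r\leq m$, noting that every entry of $T'$ exceeds $m$ so condition~(v) of Definition~\ref{def:(Q)KT} forces a cell weakly to the right in row $r+1$, and then iterating this forced upward propagation together with conditions~(iii) and~(iv) on the columns (which essentially pin down the first column of any tableau of content $0^m\times \a$) to derive a contradiction. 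Granted this bijection, the identity $\key_{0^m\times \a}=\sum_{T\in\QKT(\a)}\fund_{0^m\times \wt(T)}$ passes to the limit as $s_{\mathrm{sort}(\a)}=\sum_{T\in\QKT(\a)}F_{\mathrm{flat}(\wt(T))}$, and the contributions of $T_1$ and $T_2$ both give $F_{\mathrm{flat}(\b)}$, so this summand has coefficient at least $2$, establishing that $s_{\mathrm{sort}(\a)}$ is not multiplicity free.
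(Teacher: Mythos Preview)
Your overall strategy---contrapositive, lift the repeated tableaux to $\QKT(0^m\times\a)$ via the shift $T\mapsto T^{(m)}$, then pass to the stable limit---is exactly the paper's, and the injection is correct. The gap is the surjectivity claim: the shift $\QKT(\a)\to\QKT(0^m\times\a)$ is \emph{not} a bijection in general. Take $\a=(3,2)$. Here $\QKT(3,2)$ contains only the basic tableau, whereas $\QKT(0,3,2)$ has four elements; for instance the tableau with $3,3$ in row~$1$ (columns~$1,2$) and $2,2,2$ in row~$2$ (columns~$1,2,3$) is a quasi-Yamanouchi Kohnert tableau of weight $(2,3,0)$. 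Your proposed contradiction---propagating condition~(v) upward from a cell in row $r\le m$---never fires: once the chain reaches row~$m+1$, the first clause of~(v) (a cell filled with $m+1$ in row~$m+1$) can hold and the propagation halts. Likewise the first column is not pinned down: in the example just given, column~$1$ carries a~$3$ in row~$1$, not row~$3$. Consequently the identity $s_{\mathrm{sort}(\a)}=\sum_{T\in\QKT(\a)}F_{\mathrm{flat}(\wt(T))}$ you deduce is false in general (for $\a=(3,2)$ the right side is the single term $F_{(3,2)}$, but $s_{(3,2)}$ has five fundamental summands).

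The paper sidesteps this by appealing directly to Theorem~\ref{thm:limit}: since $\key_{0^m\times\a}$ fails to be multiplicity free for every $m$ (your injection already shows this), the stable limit $s_{\mathrm{sort}(\a)}$ cannot be multiplicity free either. If you want a tableau-level version of that passage, the correct statement is that $|\QKT(0^m\times\a)|$ is non-decreasing in~$m$ and eventually stabilizes, after which the shift \emph{is} a bijection and your limiting identity holds with $0^M\times\a$ in place of~$\a$ for $M$ large. But proving that stabilization requires a separate argument and does not follow from your sketch.
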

\begin{proof} Suppose that $\key_\a$ is not multiplicity free. Then, similar to the proof of Lemma~\ref{lem:notMF}, certainly $\key_{0^m\times \a}$ is not multiplicity free for all $m$ and we can conclude that  $s_{\mathrm{sort}(\a)}$ is not multiplicity free due to Theorem~\ref{thm:limit}.
\end{proof}

\begin{corollary} \label{cor:hooks} Let $\a$ be a weak composition with $\mathrm{sort}(\a) = (n-k, 1^k)$. Then $\key_\a$ is multiplicity free.
\end{corollary}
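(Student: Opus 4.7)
The plan is direct: apply Lemma~\ref{lem:MF}, which reduces the multiplicity freeness of $\key_\a$ in the fundamental slide basis to the multiplicity freeness of $s_{\mathrm{sort}(\a)}$ in the fundamental quasisymmetric basis. So the first step is simply to observe that the hypothesis $\mathrm{sort}(\a)=(n-k,1^k)$ exhibits $\mathrm{sort}(\a)$ as a hook shape, including the degenerate cases of a single row when $k=0$ and a single column when $k=n-1$.

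The second step is to invoke Theorem~\ref{thm:BvW13}. Case (4) of that theorem says that for every $n\geq 1$ and every $0\leq k\leq n-1$, the Schur function $s_{(n-k,1^k)}$ has a multiplicity free expansion into fundamental quasisymmetric functions. This applies verbatim to $s_{\mathrm{sort}(\a)}$, so $s_{\mathrm{sort}(\a)}$ is multiplicity free in the fundamental basis.

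Combining these two steps yields that $\key_\a$ is multiplicity free in the fundamental slide basis, which is the desired conclusion. There is no real obstacle here: the content has been packaged into Lemma~\ref{lem:MF}, whose proof uses the stable limit Theorem~\ref{thm:limit} to transfer a hypothetical multiplicity in $\key_\a$ (and hence in $\key_{0^m\times\a}$ for every $m$) to a multiplicity in $s_{\mathrm{sort}(\a)}$. So the corollary is essentially a one-line deduction once the hook is recognized and the lifting lemma is in place.
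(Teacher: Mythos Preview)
Your proof is correct and follows exactly the approach implicit in the paper: the corollary is stated immediately after Lemma~\ref{lem:MF} with no separate proof, so it is intended as the direct combination of Lemma~\ref{lem:MF} with Case~(4) of Theorem~\ref{thm:BvW13}, precisely as you have written.
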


\subsection{When $\a$ has two nonzero parts}\label{subsec:two nonzero parts}

To give an idea of how the complexity increases with the relaxing from strong compositions to weak compositions, note that this case splits into three, depending on whether the number of leading $0$s is at least two, one or none.

\begin{theorem}\label{thm:two_parts} For a weak composition $\a$ with two nonzero parts  and at least two leading $0$s, $\key_\a$ is multiplicity free if and only if
\begin{enumerate}
\item[(1)] $\mathrm{sort}(\a)=\lambda$ for the partitions $\lambda= (3, 3), (4, 4), (n-2, 2)$ for $n\geq 4$, $(n-1, 1)$ for $n \geq 2$, or
\item[(2)] $\mathrm{flat}(\a)=(4, 3)$. 
\end{enumerate}
\end{theorem}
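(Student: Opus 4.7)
The plan is to prove each direction of the biconditional by handling the two disjunctive conditions separately for sufficiency and by explicit construction for necessity.

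For sufficiency, if $\mathrm{sort}(\a) \in \{(3, 3), (4, 4), (n-2, 2), (n-1, 1)\}$, then Theorem~\ref{thm:BvW13} (specialized to two-part partitions) says $s_{\mathrm{sort}(\a)}$ is multiplicity free in the fundamental quasisymmetric basis, so Lemma~\ref{lem:MF} immediately yields that $\key_\a$ is multiplicity free. If instead $\mathrm{flat}(\a) = (4, 3)$, writing $\a = (0^{p-1}, 4, 0^{q-p-1}, 3, 0^s)$ with $p \geq 3$ and $q > p$, we directly enumerate $\QKT(\a)$. The basic tableau has four cells labeled $p$ in row $p$ and three cells labeled $q$ in row $q$, and a finite case analysis on the admissible column--row placements (using conditions (i)--(v) of Definition~\ref{def:(Q)KT}) yields an explicit list of quasi-Yamanouchi Kohnert tableaux. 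We then verify that the weights associated to these tableaux are pairwise distinct, which establishes the multiplicity freeness.

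For necessity, we prove the contrapositive: if $\mathrm{sort}(\a) = (a, b)$ with $b \geq 3$ and $(a, b) \notin \{(3, 3), (4, 4)\}$ and $\mathrm{flat}(\a) \neq (4, 3)$, we construct two distinct elements of $\QKT(\a)$ sharing the same weight. The remaining configurations split into $\mathrm{flat}(\a) = (a, b)$ with $a \geq b \geq 3$ and $(a, b) \notin \{(3, 3), (4, 4), (4, 3)\}$, or $\mathrm{flat}(\a) = (b, a)$ with $a > b \geq 3$ and $(a, b) \notin \{(3, 3), (4, 4)\}$. In each sub-case, we identify two sequences of Kohnert moves from the basic tableau that redistribute cells into configurations with identical row counts, using the empty rows below and between the nonzero entries of $\a$ as the source of the additional flexibility. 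The constructions parallel those in the proof of Theorem~\ref{thm:main1}, with explicit pairs analogous to those used to rule out patterns (a), (b), (c) there.

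The hard part is the direct enumeration in sufficiency case (2): since $s_{(4, 3)}$ itself is not multiplicity free, we cannot rely on Lemma~\ref{lem:MF} and must verify injectivity of the weight map on $\QKT(\a)$ by hand. This requires a careful classification of the QKTs according to the rows that the $p$- and $q$-labeled cells occupy. Fortunately, because the larger part $4$ precedes the smaller part $3$, the descent condition (iii) together with the column condition (iv) sharply limits which cell placements are admissible, so the enumeration is tractable and the weights turn out to be forced apart.
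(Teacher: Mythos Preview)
Your proposal is correct and follows essentially the same approach as the paper: both use Lemma~\ref{lem:MF} with Theorem~\ref{thm:BvW13} for sufficiency in case~(1), a direct hand verification for case~(2), and explicit constructions of equal-weight pairs in $\QKT(\a)$ for necessity, with the same case split on $\mathrm{flat}(\a)$. The only minor difference is in case~(2): the paper checks $(0,0,4,3)$ and $(0,0,0,4,3)$ by hand and then argues that prepending or inserting further zeros cannot create new quasi-Yamanouchi Kohnert tableaux, whereas you propose enumerating $\QKT(\a)$ directly for arbitrary $\a$ with $\mathrm{flat}(\a)=(4,3)$; both are valid, and the paper's reduction saves some casework.
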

\begin{proof} Lemma~\ref{lem:MF} together with Theorem~\ref{thm:BvW13} shows that $\key_\a$ is multiplicity free if  $\mathrm{sort}(\a)$ is one of  $(3, 3), (4, 4), (n-2, 2)$ or $(n-1, 1)$. We can check by hand that $\key_\a$ for $\a=(0,0,4,3)$ \svw{and $\a=(0,0,0,4,3)$} is multiplicity free. Moreover, more zeros at the front will keep the key polynomial multiplicity free  since by the definition of quasi-Yamanouchi Kohnert tableaux no cell can move to these \svw{added} rows with $0$ cells. Similarly adding zeros between the nonzero parts will keep the key polynomials multiplicity free. Hence, if $\mathrm{flat}(\a)=(4, 3)$ then $\key_\a$ is multiplicity free.

{To prove the other direction, for each of the following Cases (1)-(4) we explicitly find two different tableaux of same weight in $\QKT(0,0,\alpha_1, \alpha_2)$.  We note that adding zeros at the front does not reduce the multiplicity. Moreover, all tableaux we give can be naturally extended when we add zeros between $\alpha_1$ and $\alpha_2$, which shows the general cases also. }

\begin{itemize}
    \item[(1)] $\alpha_1=\alpha_2 \geq 5$; see Figure~\ref{fig:case(1)}.
    \item[(2)] $\alpha_1>\alpha_2 \geq 4$; see Figure~\ref{fig:case(2)}.
    \item[(3)] $\alpha_1-1>\alpha_2=3$; see Figure~\ref{fig:case(3)}. 
 { \item[(4)] $\alpha_2>\alpha_1 \geq 3$; see Figure~\ref{fig:case(4)}.}
  %  \item[(5)] $\alpha_1=3,\, \alpha_2= 4$; see Figure~\ref{fig:case(5)}.
\end{itemize}
   
    \begin{figure}[ht]
\begin{center}
    \begin{tikzpicture}[xscale=2,yscale=2] 
       \node at (0,0)  {$ \vline\tableau{\\4&4\\3& & 4 & 4 \\ &3 &3 & & 4&4&\cdots &4\\ & & & 3& 3&3& \cdots &3\\ \hline} $}; 
       \node at (3.5,0)  {$ \vline\tableau{\\4&4\\3& 3 & 4 \\ & &3 &4& 4&4&\cdots &4\\ & & & 3& 3&3&\cdots &3\\ \hline } $};
\end{tikzpicture}
  \end{center}
  \caption{\label{fig:case(1)}  Two tableaux in $\QKT(0,0,\alpha_1,\alpha_2)$  of same weight when $\alpha_1=\alpha_2\geq 5$.}
\end{figure}
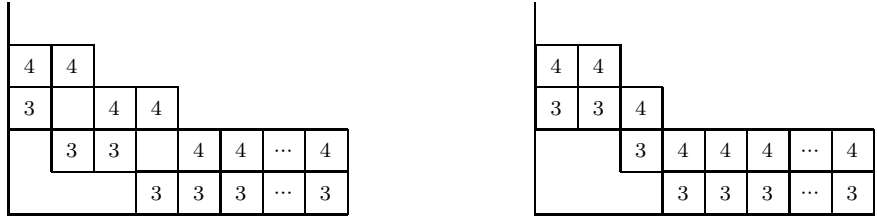

     \begin{figure}[ht]
\begin{center}
    \begin{tikzpicture}[xscale=2,yscale=2] 
       \node at (0,0)  {$ \vline\tableau{\\4\\3&3 &3 \\ &4& &3 &3&3&\cdots&3\\ & &4 &4&4&\cdots&4&&3&\cdots&3\\ \hline} $};
       \node at (3.5,0)  {$ \vline\tableau{\\4\\3&4 &4 \\ &3& 3 &4&4&\cdots&4\\ && &3 &3&3&\cdots &3&3&\cdots&3\\ \hline} $};
\end{tikzpicture}
  \end{center}
  \caption{\label{fig:case(2)}  Two tableaux in $\QKT(0,0,\alpha_1,\alpha_2)$  of  same weight when $\alpha_1>\alpha_2\geq 4$.}
\end{figure}

      \begin{figure}[ht]
\begin{center}
    \begin{tikzpicture}[xscale=2,yscale=2] 
       \node at (0,0)  {$ \vline\tableau{\\3&3 &3 \\4& &&3 &3\\ & 4 &4& & & 3 & \cdots & 3\\ \hline} $};
       \node at (3.5,0)  {$ \vline\tableau{\\3&3 &3 \\4&4&&3 \\& & 4& &3& 3 & \cdots & 3\\ \hline} $};
\end{tikzpicture}
  \end{center}
  \caption{\label{fig:case(3)}  Two tableaux in $\QKT(0,0,\alpha_1,\alpha_2)$  of  same weight when $\alpha_1-1>\alpha_2=3$.}
\end{figure}
  
    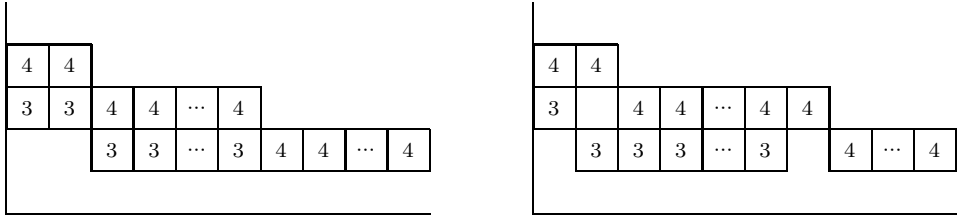
\begin{figure}[ht]
\begin{center}
    \begin{tikzpicture}[xscale=2,yscale=2] 
       \node at (0,0)  {$ \vline\tableau{\\4&4 \\3&3&4&4&\cdots&4\\& &3 &3&\cdots&3 &4&4&\cdots & 4\\  \\ \hline}$};
       \node at (3.5,0)  {$ \vline\tableau{\\4&4 \\3&&4&4&\cdots & 4&4\\& 3 &3&3&\cdots & 3& & 4&\cdots & 4\\ \\ \hline} $};
\end{tikzpicture}
  \end{center}
  \caption{\label{fig:case(4)} Two tableaux in $\QKT(0,0,\alpha_1,\alpha_2)$  of  same weight when $\alpha_2>\alpha_1\geq 3$.}
\end{figure}
  \end{proof}  
      
 % \begin{figure}[ht]
%\begin{center}
 %   \begin{tikzpicture}[xscale=2,yscale=2] 
  %     \node at (0,0)  {$ \vline\tableau{\\4&4 \\3&3&4\\ & &3 &4\\ \\ \hline} $};
   %    \node at (2,0)  {$ \vline\tableau{\\4&4 \\3&&4&4\\ & 3 &3\\ \\ \hline} $};
%\end{tikzpicture}
 % \end{center}
 % \caption{\label{fig:case(5)}  Two tableaux in $\QKT(0,0,3, 4)$ of a same weight.}
%\end{figure}

{In the proof of Theorem~\ref{thm:two_parts}, two given quasi-Yamanouchi Kohnert tableaux of the same weight for  Case (4) given in Figure~\ref{fig:case(4)}, have \emph{empty} first rows. This means that there are at least two tableaux in $\QKT(0, \alpha_1, \alpha_2)$ when $\alpha_1, \alpha_2$ satisfy Case (4). % we can have some useful facts on weak compositions that has at least one zero at front.

\begin{lemma}\label{lem:two_parts_1} Let $\a$ be a weak composition with at least one leading $0$ and $\mathrm{flat}(\a)=(\alpha_1, \alpha_2)$ for $\alpha_2>\alpha_1\geq 3$. Then $\key_\a$ is not multiplicity free.
\end{lemma}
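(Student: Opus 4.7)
The plan is to leverage the remark immediately preceding the lemma, which already establishes the key case $\a = (0, \alpha_1, \alpha_2)$: since the two tableaux exhibited in Figure~\ref{fig:case(4)} both have an empty first row, I would remove that empty row and decrease every entry by one, obtaining two distinct quasi-Yamanouchi Kohnert tableaux in $\QKT(0, \alpha_1, \alpha_2)$ of the same weight. Verifying conditions (i)--(v) of Definition~\ref{def:(Q)KT} is routine: every original entry was at least $3$, so after decreasing it is at least $2$ and matches the new row index; the column positions are unchanged; and the relative row positions that determine conditions (iii), (iv) and the quasi-Yamanouchi condition (v) are preserved.

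For a general weak composition $\a$ with $\mathrm{flat}(\a) = (\alpha_1, \alpha_2)$ and at least one leading zero, I would write $\a = (0^{p_0}, \alpha_1, 0^{p_1}, \alpha_2)$ with $p_0 \geq 1$ and $p_1 \geq 0$, and extend the two tableaux of $\QKT(0, \alpha_1, \alpha_2)$ constructed above to tableaux in $\QKT(\a)$ by the ``insert empty rows and relabel'' technique used in Lemma~\ref{lem:notMF}: for each additional zero of $\a$, insert an empty row at the corresponding position and increase all entries in rows strictly above by one. The resulting pair in $\QKT(\a)$ remains distinct and of equal weight, which by Proposition~\ref{prop:key2slide} forces a repeated fundamental slide polynomial in the expansion of $\key_\a$.

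The main step to verify is that both operations---removing an empty bottom row while decreasing all entries by one, and inserting empty rows while increasing entries above---preserve the five conditions of a quasi-Yamanouchi Kohnert tableau. I expect this to be straightforward because the column positions are left unchanged and the shift of row indices is matched by a corresponding shift of cell entries; hence conditions (i)--(iv) are immediate, and condition (v) is preserved because whether a nonempty row satisfies (v) depends only on the relative position of cells in that row and the row above, which is invariant under these uniform operations. With these checks in hand, the two distinct tableaux of equal weight exhibit a multiplicity in $\key_\a$, completing the proof.
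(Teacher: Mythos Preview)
Your proposal is correct and follows essentially the paper's own argument. The paper's proof of this lemma is just the remark immediately preceding it: the two tableaux of Figure~\ref{fig:case(4)} have empty first rows, so deleting that row and decreasing labels yields two distinct equal-weight elements of $\QKT(0,\alpha_1,\alpha_2)$; the extension to general $\a$ is left implicit via the same ``add zeros'' manoeuvres already used in Theorem~\ref{thm:two_parts} and Lemma~\ref{lem:notMF}. You have made those implicit steps explicit.

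One small correction to your description of the insertion step for zeros \emph{between} the two nonzero parts. You write ``increase all entries in rows strictly above by one,'' but in the two tableaux you build in $\QKT(0,\alpha_1,\alpha_2)$ there are cells labeled $3$ sitting in rows $1$ and $2$ (e.g.\ the $3$'s coming from the $4$'s of Figure~\ref{fig:case(4)} that had already dropped below row~$3$). These must also be relabeled when a zero is inserted at position $3$, even though their rows do not move; otherwise the content is wrong. The correct rule is: relabel every entry $j$ with $j$ at least the inserted position, and shift up only those cells whose \emph{row index} is at least that position. With this fix your verification of (i)--(v) goes through; for (v) in particular, note that in both tableaux column~$1$ of row~$2$ carries a $2$, so row~$2$ still satisfies the quasi-Yamanouchi condition once the row above it becomes empty.
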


\begin{theorem}\label{thm:two_parts_2}  For a weak composition $\a$ with two nonzero parts  and exactly one leading $0$, $\key_\a$ is multiplicity free if and only if
\begin{enumerate}
\item[(1)] $\mathrm{sort}(\a)=\lambda$ for the partitions $\lambda= (3, 3), (4, 4), (n-2, 2)$ for $n\geq 4$, $(n-1, 1)$ for $n\geq 2$, or
\item[(2)] $\mathrm{flat}(\a)=(\alpha_1, \alpha_2)$, for $\alpha_1\geq \alpha_2$.
\end{enumerate}
\end{theorem}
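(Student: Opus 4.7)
The plan is to verify both directions of the biconditional.

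For the ``if'' direction, case (1) is immediate from Lemma~\ref{lem:MF} combined with Theorem~\ref{thm:BvW13}, since each listed partition $\lambda$ has $s_\lambda$ multiplicity free in the fundamental quasisymmetric basis. For case (2), where $\mathrm{flat}(\a)=(\alpha_1,\alpha_2)$ with $\alpha_1\geq\alpha_2$, I would write $\a=(0,\alpha_1,0^m,\alpha_2,0^*)$ and explicitly enumerate $\QKT(\a)$. In the basic tableau the $\alpha_1$ entries sit in row $2$ and the $\alpha_2$ entries in row $k:=m+3$. Using the column condition (iv) and the quasi-Yamanouchi condition (v), I would show that every other element of $\QKT(\a)$ is obtained by sliding the rightmost $j$ entries of row $k$ downward: directly to row $1$ when $\alpha_1>\alpha_2$, or down in pairs while simultaneously pushing the corresponding row-$2$ entries into row $1$ when $m\geq 1$, for an appropriate range of $j$. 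Each value of $j$ produces a weight distinguished from the others by the number of cells in row $1$, so the resulting weights are pairwise distinct and $\key_\a$ is multiplicity free.

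For the ``only if'' direction, suppose $\a$ satisfies neither (1) nor (2). Failure of (2) yields $\alpha_1<\alpha_2$. Failure of (1) then excludes $\alpha_1\in\{1,2\}$, since these would make $\mathrm{sort}(\a)$ a hook or of shape $(n-2,2)$; the partitions $(3,3)$ and $(4,4)$ are also ruled out because $\alpha_1<\alpha_2$. Therefore $\alpha_2>\alpha_1\geq 3$, and since $\a$ has a leading zero, Lemma~\ref{lem:two_parts_1} concludes that $\key_\a$ is not multiplicity free.

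The main obstacle is case (2) of the ``if'' direction: certifying the complete list of elements of $\QKT(\a)$ requires a careful simultaneous check of conditions (iii), (iv), and (v), and the structure of $\QKT(\a)$ splits according to whether $\alpha_1=\alpha_2$ versus $\alpha_1>\alpha_2$, and whether $m=0$ versus $m\geq 1$. Once the sliding moves in each subcase are pinned down, the distinctness of weights is transparent because the parameter $j$ is recoverable as the first part of $\mathrm{wt}(T)$.
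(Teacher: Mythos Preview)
Your ``only if'' direction and case~(1) of the ``if'' direction are correct and match the paper's argument exactly.

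The gap is in case~(2) of the ``if'' direction. Your enumeration of $\QKT(\a)$ is incomplete, and the claim that the weights are distinguished by the first coordinate (the parameter $j$) is false. Take $\a=(0,5,4)$, so $\alpha_1>\alpha_2$ and $m=0$. Besides the tableaux obtained by sliding $j$ entries from row~$3$ directly to row~$1$ (giving weights $(j,5,4-j)$), there is for instance the tableau with $2$'s in row~$2$ columns $1$--$3$, $2$'s in row~$1$ columns $4$--$5$, $3$'s in row~$3$ columns $1$--$3$, and a $3$ in row~$2$ column~$4$: this is a valid element of $\QKT(0,5,4)$ with weight $(2,4,3)$, sharing its first coordinate with the weight $(2,5,2)$ coming from your family. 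So a single sliding parameter~$j$ does not index $\QKT(\a)$, and row-$1$ count does not separate weights. More generally, when $\alpha_1=\alpha_2$ the set $\QKT(0,\alpha_1,\alpha_2)$ is genuinely two-parameter (how many $3$'s remain in row~$3$, and independently how many $2$'s drop to row~$1$), not one-parameter.

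The paper handles this by first reducing $m>0$ to $m=0$ (rows $3,\dots,m+2$ are forced empty by condition~(v)), and then, for $(0,\alpha_1,\alpha_2)$ with $\alpha_1\ge\alpha_2$, partitioning $\QKT$ into three types according to which rows cells have moved between: only row~$3\to$ row~$1$; both row~$2$ and row~$3$ into row~$1$; or row~$2\to$ row~$1$ together with row~$3\to$ row~$2$. The key observation is that the \emph{row~$2$} count already separates these types (it equals $\alpha_1$, lies strictly between $\alpha_2$ and $\alpha_1$, or is at most $\alpha_2$, respectively), and within each type the remaining coordinates finish the job. The case $\alpha_1=\alpha_2$ is treated separately (no $3$ can reach row~$1$, and the pair (row~$1$ count, row~$3$ count) determines the weight). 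Your argument needs this finer case split; the one-parameter picture does not survive contact with the actual tableaux.
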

\begin{proof}
We know from Lemma~\ref{lem:two_parts_1}, that  cases other than the ones in the theorem have multiplicities. Hence, we only need to show that in Cases (1) or (2), $\key_\a$ is multiplicity free.

Lemma~\ref{lem:MF} together with Theorem~\ref{thm:BvW13} shows that $\key_\a$ is multiplicity free if  $\mathrm{sort}(\a)$ is one of  $(3, 3), (4, 4), (n-2, 2)$ or $(n-1, 1)$. 

Before we prove that weak compositions in (b) are multiplicity free, we note the following: If $T$ is a tableau in $\QKT(0,\alpha_1, \underbrace{0,\dots, 0}_m, \alpha_2)$ for $m>0$, then the row $i$,  $i=3, \dots, 2+m$, of $T$ is empty, due to the definition of quasi-Yamanouchi Kohnert tableaux. Hence it is enough to consider the weak compositions $(0, \alpha_1, \alpha_2)$ for the proof. 
 
Suppose that $\alpha_1=\alpha_2=a$. By the definition of (quasi-Yamanouchi) Kohnert tableaux Part (iv) we know that none of the $a$ 3s can appear in row 1. Thus row 1 will contain $x$ 2s, row 3 will contain $y$ 3s, and row 2 will contain the remaining 2s and 3s. Since the weight of each of these tableaux is uniquely determined by the number of cells in row 1 and in row 3, it follows that $\kappa _{(0,a,a)}$ is multiplicity free.

Now suppose that $\alpha_1>\alpha_2\geq 4$, or $\alpha_1\geq 5$ and $\alpha_2=3$. If only $x$, for $1\leq x \leq \alpha_1$, cells from row 2 move down to row 1, then we do not obtain a quasi-Yamanouchi Kohnert tableau.
If only $y$, for $0\leq y \leq \alpha_2$, cells from row 3 move down to row 1, then we obtain a quasi-Yamanouchi Kohnert tableau of weight
$$(y, \alpha_1, \alpha_2-y).$$ If cells from rows 2 and 3 move down to row 1, then, by definition, row 2 has at least $\alpha_2+1$ cells but $< \alpha _ 1$ cells, and row 3 has $<\alpha_2$ cells. Lastly, if cells from row 2 move down to row 1, and cells from row 3 move down to row 2, then, by definition, row 2 has at most $\alpha_2$ cells and row 3 has $<\alpha_2$ cells. In every case the weight is unique, and hence  $\kappa _{(0,\alpha_1,\alpha_2)}$ is multiplicity free.

Finally, $\kappa _{(0, 4, 3)}$ is multiplicity free since $\kappa _{(0, 0, 4, 3)}$ is multiplicity free as we showed in Theorem~\ref{thm:two_parts}.
\end{proof}
}

\begin{theorem}\label{thm:two_parts_3} For a weak composition $\a$ with two nonzero parts  and no leading $0$, $\key_\a$ is multiplicity free.\end{theorem}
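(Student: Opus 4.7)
The plan is to exploit the rigid structure of quasi-Yamanouchi Kohnert tableaux whose content has only two nonzero parts. I would first write $\a = (\alpha_1, 0^m, \alpha_2, 0^n)$ with $\alpha_1, \alpha_2 > 0$ and $m, n \geq 0$, and observe that the trailing zeros are immaterial: the only entries allowed in a tableau of content $\a$ are $1$ and $m+2$, and by condition (ii) both are confined to rows of index at most $m+2$, so $\QKT(\alpha_1, 0^m, \alpha_2, 0^n) = \QKT(\alpha_1, 0^m, \alpha_2)$. Thus we may assume $n = 0$. If $m = 0$, then $\a = (\alpha_1, \alpha_2)$ is a strong composition of length $2$, and the forbidden patterns in conditions (a), (b), (c) of Theorem~\ref{thm:main2} all require at least three indices, so they are vacuous and $\key_\a$ is multiplicity free.

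If $m \geq 1$, I would analyze $\QKT(\alpha_1, 0^m, \alpha_2)$ directly. The $\alpha_1$ copies of $1$ must lie in row $1$ in columns $1, \ldots, \alpha_1$, while the $\alpha_2$ copies of $m+2$ each occupy one cell in columns $1, \ldots, \alpha_2$, with row indices $r_1 \geq r_2 \geq \cdots \geq r_{\alpha_2}$ by conditions (ii) and (iii) of Definition~\ref{def:(Q)KT}. The crucial step is to show that $r_c \in \{1, m+2\}$ for every $c$: if $r_c = r$ for some $2 \leq r \leq m+1$, then row $r$ is nonempty but contains no entry equal to $r$, so condition (v) demands a cell in row $r+1$ that is weakly right of the row $r$ cell; yet the weak descent from (iii) places any cell in row $r+1$ strictly left of every cell in row $r$, a contradiction. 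Moreover, for $c \leq \alpha_1$ the value $r_c = 1$ would clash with the $1$ already sitting in row $1$ of column $c$, so $r_c = m+2$ throughout that range.

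Consequently every $T \in \QKT(\a)$ is specified by a single threshold $c^*$ with $r_c = m+2$ for $c \leq c^*$ and $r_c = 1$ for $c > c^*$, where $c^*$ ranges over $\{\alpha_1, \ldots, \alpha_2\}$ when $\alpha_1 \leq \alpha_2$ and equals $\alpha_2$ when $\alpha_1 \geq \alpha_2$. The weight of such $T$ has exactly $c^*$ cells in row $m+2$, so distinct thresholds produce distinct weights, and by Proposition~\ref{prop:key2slide} the expansion of $\key_\a$ into fundamental slide polynomials is multiplicity free. The main obstacle will be the careful verification that condition (v), combined with (iii), really collapses all the intermediate rows; once this is in hand, the threshold enumeration and the uniqueness of weights are immediate.
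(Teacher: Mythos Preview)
Your proof is correct but takes a different route from the paper's. The paper's argument is a brief appeal to two earlier results: for $\alpha_1 \geq \alpha_2$ it invokes part~(3) of Theorem~\ref{thm:k=f} (which gives $\key_\a = \fund_\a$ outright), and for $\alpha_1 < \alpha_2$ it invokes Lemma~\ref{lem:expansion} for the strong composition $(\alpha_1,\alpha_2)$ and then asserts in one line that inserting zeros between the two nonzero parts preserves multiplicity freeness ``by definition.'' You instead split on whether there are intermediate zeros: when $m=0$ you cite the heavier Theorem~\ref{thm:main2}, and when $m\geq 1$ you carry out a direct structural analysis of $\QKT(\alpha_1,0^m,\alpha_2)$, using condition~(v) together with the weak descent~(iii) to show that no $m{+}2$ can land in an intermediate row, so that every tableau is determined by a single threshold $c^*$. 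Your approach is more self-contained and in effect re-derives the relevant special cases of Theorem~\ref{thm:k=f}(3) and Lemma~\ref{lem:expansion} from scratch, while also making explicit the step the paper leaves as ``by definition.'' The paper's argument is shorter and leans on machinery already in place; yours is longer but yields an explicit parameterization of $\QKT(\a)$ in this case.
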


\begin{proof} Let $\mathrm{flat}(\a)=(\alpha_1, \alpha_2)=\alpha$. If $\alpha _1 \geq \alpha _2$, then $\key _\a$ is multiplicity free by the third part of Theorem~\ref{thm:k=f}. If $\alpha _1 < \alpha _2$, then $\key _\alpha$ is multiplicity free by Lemma~\ref{lem:expansion}. Adding zeros between the nonzero parts will keep the key polynomial multiplicity free by definition, and so $\key _\a$ is multiplicity free. 
\end{proof}

%%%%%%%%%%%%%%%%%%%%%%%%%%%%%%%%%%%%%%%%%%%%%%%%%%%%%%%%%%%%%%%%%%%%%%%%%%%%
\bibliographystyle{amsplain}
\bibliography{key_paper}

\providecommand{\bysame}{\leavevmode\hbox to3em{\hrulefill}\thinspace}
\providecommand{\MR}{\relax\ifhmode\unskip\space\fi MR }
% \MRhref is called by the amsart/book/proc definition of \MR.
\providecommand{\MRhref}[2]{%
  \href{http://www.ams.org/mathscinet-getitem?mr=#1}{#2}
}
\providecommand{\href}[2]{#2}
\begin{thebibliography}{10}

\bibitem{AS17}
S.~Assaf and D.~Searles, \emph{Schubert polynomials, slide polynomials,
  {S}tanley symmetric functions and quasi-{Y}amanouchi pipe dreams}, Adv. Math.
  \textbf{306} (2017), 89--122.

\bibitem{AS18}
\bysame, \emph{Kohnert tableaux and a lifting of quasi-{S}chur functions}, J.
  Combin. Theory Ser. A \textbf{156} (2018), 85--118.

\bibitem{Bess02}
C.~Bessenrodt, \emph{On multiplicity-free products of {S}chur {$P$}-functions},
  Ann. Comb. \textbf{6} (2002), 119--124.

\bibitem{BvW13}
C.~Bessenrodt and S.~van Willigenburg, \emph{Multiplicity free {S}chur, skew
  {S}chur, and quasisymmetric {S}chur functions}, Ann. Comb. \textbf{17}
  (2013), 275--294.

\bibitem{BP14}
S.~Billey and B.~Pawlowski, \emph{Permutation patterns, {S}tanley symmetric
  functions, and generalized {S}pecht modules}, J. Combin. Theory Ser. A
  \textbf{127} (2014), 85--120.

\bibitem{French96}
G.~Duchamp, D.~Krob, B.~Leclerc, and J.-Y. Thibon, \emph{Fonctions
  quasi-sym\'{e}triques, fonctions sym\'{e}triques non commutatives, et
  alg\`{e}bres de {H}ecke \`{a} $q = 0$}, C. R. Acad. Sci. Paris \textbf{322}
  (1996), 107--112.

\bibitem{FM20}
A.~{Fink}, K.~{Meszaros}, and A.~{St. Dizier}, \emph{Zero-one {S}chubert
  polynomials}, Math. Z. (2020).

\bibitem{Ges84}
I.~Gessel, \emph{Multipartite {$P$}-partitions and inner products of skew
  {S}chur functions}, Combinatorics and algebra (Boulder, Colo., 1983),
  Contemp. Math., vol.~34, Amer. Math. Soc., Providence, RI, 1984,
  pp.~289--317.

\bibitem{Gut10}
C.~Gutschwager, \emph{On multiplicity-free skew characters and the {S}chubert
  calculus}, Ann. Comb. \textbf{14} (2010), 339--353.

\bibitem{HY20}
R.~Hodges and A.~Yong, \emph{Coxeter combinatorics and spherical {S}chubert
  geometry}, 2020.

\bibitem{HY20MF}
\bysame, \emph{Multiplicity-free key polynomials}, 2020.

\bibitem{I03}
B.~Ion, \emph{Nonsymmetric {M}acdonald polynomials and {D}emazure characters},
  Duke Math. J. \textbf{116} (2003), 299--318.

\bibitem{LS82}
A.~Lascoux and M.-P. Sch{\"u}tzenberger, \emph{Polyn\^omes de {S}chubert}, C.
  R. Acad. Sci. Paris S\'er. I Math. \textbf{294} (1982), 447--450.

\bibitem{LS90}
\bysame, \emph{Keys \& standard bases}, Invariant theory and tableaux
  ({M}inneapolis, {MN}, 1988), IMA Vol. Math. Appl., vol.~19, Springer, New
  York, 1990, pp.~125--144.

\bibitem{Mas09}
S.~Mason, \emph{An explicit construction of type {A} {D}emazure atoms}, J.
  Algebraic Combin. \textbf{29} (2009), 295--313.

\bibitem{RS95}
V.~Reiner and M.~Shimozono, \emph{Key polynomials and a flagged
  {L}ittlewood-{R}ichardson rule}, J. Combin. Theory Ser. A \textbf{70} (1995),
  107--143.

\bibitem{SvW07}
K.~Shaw and S.~van Willigenburg, \emph{Multiplicity free expansions of {S}chur
  {$P$}-functions}, Ann. Comb. \textbf{11} (2007), 69--77.

\bibitem{Stem01}
J.~Stembridge, \emph{Multiplicity-free products of {S}chur functions}, Ann.
  Comb. \textbf{5} (2001), 113--121.

\bibitem{TY10}
H.~Thomas and A.~Yong, \emph{Multiplicity-free {S}chubert calculus}, Canad.
  Math. Bull. \textbf{53} (2010), 171--186.

\end{thebibliography}
\end{document}